\newtheorem{theorem}{Theorem}
\newtheorem{proposition}[theorem]{Proposition}
\newtheorem{lemma}[theorem]{Lemma}
\newtheorem{corollary}[theorem]{Corollary}
\newenvironment{proof}{\noindent{\scshape Proof.}}{\hspace*{2mm} $\square$}
\newenvironment{demo}[1]{\noindent{\scshape{Proof of #1.}}}{\hspace*{2mm}~$\square$}
\newcommand{\Z}{\mathbb{Z}}
\newcommand{\ind}{\mathbf{1}}
\newcommand{\ep}{\epsilon}
\DeclareMathOperator{\card}{card \,}
\DeclareMathOperator{\poisson}{Poisson \,}
\DeclareMathOperator{\geometric}{Geometric \,}
\def\lbd{\lambda}
\def\Lbd{\Lambda}
\def\beq{\begin{equation}}
\def\eeq{\end{equation}}
\def\bay{\begin{array}}
\def\eay{\end{array}}
\begin{document}

\begin{frontmatter}

\title     {Generalized stacked contact process \\ with variable host fitness}
\runtitle  {Generalized stacked contact process}
\author    {Eric Foxall\thanks{Research supported in part by an NSERC PDF Award} and Nicolas Lanchier\thanks{Research supported in part by NSA Grant MPS-14-040958.}}
\runauthor {Eric Foxall and Nicolas Lanchier}
\address   {School of Mathematical and Statistical Sciences, \\ Arizona State University, \\ Tempe, AZ 85287, USA.}

\begin{abstract} \ \
 The stacked contact process is a three-state spin system that describes the co-evolution of a population of hosts together with their symbionts.
 In a nutshell, the hosts evolve according to a contact process while the symbionts evolve according to a contact process on the dynamic subset of the lattice
 occupied by the host population, indicating that the symbiont can only live within a host.
 This paper is concerned with a generalization of this system in which the symbionts may affect the fitness of the hosts by either decreasing (pathogen) or
 increasing (mutualist) their birth rate.
 Standard coupling arguments are first used to compare the process with other interacting particle systems and deduce the long-term behavior of the
 host-symbiont system in several parameter regions.
 The mean-field approximation of the process is also studied in detail and compared to the spatial model.
 Our main result focuses on the case where unassociated hosts have a supercritical birth rate whereas hosts associated to a pathogen have a subcritical birth rate.
 In this case, the mean-field model predicts coexistence of the hosts and their pathogens provided the infection rate is large enough.
 For the spatial model, however, only the hosts survive on the one-dimensional integer lattice.

\end{abstract}

\begin{keyword}[class=AMS]
\kwd[Primary ]{60K35}
\end{keyword}

\begin{keyword}
\kwd{Multitype contact process, forest fire model, host, pathogen, mutualist.}
\end{keyword}

\end{frontmatter}


\section{Introduction}
\label{sec:intro}

\indent The stochastic model considered in this paper is a generalization of the stacked contact process introduced in~\cite{court_blythe_allen_2012} and studied
 analytically in~\cite{lanchier_zhang_2015}.
 The stacked contact process is a spatial stochastic process based on the framework of interacting particle systems that describes the co-evolution of a population of hosts together
 with their symbionts.
 Individuals are located on the~$d$-dimensional integer lattice and interact with their nearest neighbors.
 The model assumes that the symbionts can only live in association with their host (obligate relationship) and are transmitted both vertically from associated hosts to their offspring
 and horizontally from associated hosts to nearby unassociated hosts.
 The stacked contact process~\cite{court_blythe_allen_2012, lanchier_zhang_2015} also assumes that all the hosts give birth and die at the same rate regardless of whether they are
 associated with a symbiont or not, meaning that the symbionts have no effect on the fitness of their host. \\
\indent This paper considers the natural generalization of the stacked contact process in which associated and unassociated hosts have different birth rates:
 symbionts that increase the birth rate of their host, and therefore have a beneficial effect, are referred to as a mutualists, whereas symbionts that decrease the birth rate of their
 host, and therefore have a detrimental effect, are referred to as pathogens.
 Formally, the state of the system at time~$t$ is a spatial configuration
 $$ \xi_t : \Z^d \longrightarrow \{0, 1, 2 \} $$
 where state~0 means empty, state~1 means occupied by an unassociated host, and state~2 means occupied by a host associated with a symbiont. Letting
 $$ \begin{array}{l} f_i (x, \xi) = (1/2d) \,\card \{y \in \Z^d : \sum_{j = 1, 2, \ldots, d} |x_j - y_j| = 1 \ \hbox{and} \ \xi (y) = i \} \end{array} $$
 be the fraction of nearest neighbors of vertex~$x$ which are in state~$i$, hosts and symbionts co-evolve according to the spin system whose transition rates at vertex~$x$ are given by
\begin{equation}
\label{eq:transitions}
  \begin{array}{rclcrcl}
   0 \ \to \ 1 & \hbox{at rate} & \lambda_{10} \,f_1 (x, \xi) & \quad & 1 \ \to \ 0 & \hbox{at rate} & 1 \vspace*{2pt} \\
   0 \ \to \ 2 & \hbox{at rate} & \lambda_{20} \,f_2 (x, \xi) & \quad & 2 \ \to \ 0 & \hbox{at rate} & 1 \vspace*{2pt} \\
   1 \ \to \ 2 & \hbox{at rate} & \lambda_{21} \,f_2 (x, \xi) & \quad & 2 \ \to \ 1 & \hbox{at rate} & \delta. \end{array}
\end{equation}
 The first four transition rates indicate that unassociated hosts give birth at rate~$\lambda_{10}$, hosts associated with a symbiont give birth at rate~$\lambda_{20}$ and,
 regardless of whether they are associated or not, all the hosts die at the normalized rate one.
 An offspring produced at~$x$ is sent to a vertex chosen uniformly at random among the nearest neighbors but the birth is suppressed when the target site is already occupied, which models
 competition for space.
 The offspring is always of the same type as its parent, indicating that the symbiont is always transmitted vertically.
 The process described by these four transitions is the multitype contact process~\cite{neuhauser_1992}.
 The effect of the symbiont on the host is modeled by the choice of the two birth rates: the symbiont is
 $$ \begin{array}{rcl}
     \hbox{a pathogen} & \hbox{when} & \lambda_{20} < \lambda_{10}  \vspace*{2pt} \\
    \hbox{a mutualist} & \hbox{when} & \lambda_{20} > \lambda_{10}. \end{array} $$
 The last two transitions describe the symbiont dynamics within the host population.
 The symbiont spreads to adjacent unassociated hosts at rate~$\lambda_{21}$, which corresponds to a horizontal transmission of the symbiont.
 Finally, hosts associated with a symbiont become unassociated at rate~$\delta$, which we simply call the recovery rate even when the symbiont is a mutualist.

\indent The stacked contact process~\cite{court_blythe_allen_2012, lanchier_zhang_2015} is obtained by setting~$\lambda_{20} = \lambda_{10}$.
 This corresponds to the neutral case in which the symbionts have no effect on the fertility of their hosts, i.e., all the hosts have the same birth rate.
 The analysis of this special case in~\cite{lanchier_zhang_2015} is somewhat facilitated by the fact that the process is attractive and monotone with respect to its parameters.
 This is true in certain cases when~$\lambda_{10} \neq \lambda_{20}$, although not in the cases that we consider. \\


\noindent {\bf Mean-field approximation.}
 Before studying the spatial stochastic process, we first look at its non-spatial deterministic counterpart called mean-field approximation, consisting of a pair of coupled ordinary differential equations. 
 To derive it, consider a set of $N$ sites each of which can be either empty, occupied by an unassociated host, or occupied by an associated host. We suppose that each unassociated host attempts to give birth to an unassociated host onto a site chosen uniformly at random at rate $\lbd_{10}$, being successful if that site is empty. Similarly, each associated host attempts to give birth to an associated host at rate $\lbd_{20}$. Each host dies at rate $1$, while each symbiont dies (i.e. each associated host becomes an unassociated host) at rate $\delta$. Each associated host attempts to transmit the symbiont to a randomly chosen site at rate $\lbd_{21}$, being successful if the recipient is an unassociated host. Letting $U(t) = (U_0(t),U_1(t),U_2(t))$ denote the number of empty sites, unassociated hosts and associated hosts, respectively and rescaling to $u = U/N$, we have the Markov chain with transitions
 $$\bay{rclrcl}
  u \to u + N^{-1}(-1,1,0) & \hbox{at rate} & N \lbd_{10} \,u_0u_1, \quad &
  u \to u + N^{-1}(1,-1,0) & \hbox{at rate} & N u_1, \vspace*{2pt} \\
  u \to u + N^{-1}(-1,0,1) & \hbox{at rate} & N \lbd_{20} \,u_0u_2, \quad &
  u \to u + N^{-1}(1,0,-1) & \hbox{at rate} & N u_2, \vspace*{2pt} \\
  u \to u + N^{-1}(0,-1,1) & \hbox{at rate} & N \lbd_{21} \,u_1u_2, \quad &
  u \to u + N^{-1}(0,1,-1) & \hbox{at rate} & N \delta \,u_2. \eay$$
 This shows that~$u$ is a density-dependent Markov chain in the sense of~\cite{kurtz-ddmc}.
 Since the three densities add up to one, instead let~$u = (u_1, u_2)$.
 Writing as~$u^N$ to emphasize the dependence on~$N$, if~$\lim_{N \to \infty} u^N(0) = u$
 and $\ep,T > 0$, it follows from Theorem 2.2 in~\cite{kurtz-ddmc} that
 $$ \lim_{N\to\infty} P \left(\sup_{t \le T}|u^N(t) - u(t)| > \ep \right) = 0, $$
 where~$u(t)$ is the solution to the initial value problem $u(0) = u$ and
\begin{equation}
\label{eqMF}
 \begin{array}{rcl}
      u_1' & = & \lbd_{10} \,u_0 u_1 - u_1 + \delta u_2 - \lbd_{21} \,u_1 u_2 \vspace*{2pt} \\
      u_2' & = & \lbd_{20} \,u_0 u_2 - u_2 - \delta u_2 + \lbd_{21} \,u_1 u_2. \end{array}
\end{equation}
 It turns out to be more productive to study the proportion of hosts:
 $x_1 = u_1 + u_2$ and the proportion of hosts that are associated: $x_2 = u_2/x_1$.
 Letting $\lbd_a = \lbd_{20} - \lbd_{10}$ and $\lbd_b = -\lbd_a + \lbd_{21}$, after a bit of algebra, we obtain the system
\begin{equation}\label{eq:chgMF}
\begin{array}{rcl}
  x_1' & = & G_1 (x_1,x_2) = x_1 \,((\lbd_{10} + \lbd_a x_2)(1-x_1) - 1) \vspace*{2pt} \\
  x_2' & = & G_2 (x_1,x_2) = x_2 \,((\lbd_a +  \lbd_b x_1)(1-x_2) - \delta).
\end{array}
\end{equation}
 Define the set of interest $\Lambda = [0, 1]^2$ and
 $$ \Lbd_+ = \begin{cases} (0, 1] \times (0, 1) = \{(x_1, x_2) \in \Lbd : x_1>0, \ 0 <x_2<1 \} & \hbox{if} \ \delta = 0, \vspace*{2pt} \\
                           (0, 1] \times (0, 1] = \{(x_1, x_2) \in \Lbd : x_1,x_2 > 0 \} & \hbox{if} \ \delta > 0,\end{cases} $$ 
 which is obtained by removing invariant lines on the boundary of $\Lbd$. 
 In addition, let
 $$\bay{ll}
    p_0=(0,0),                         & \quad p_1 = (a_1, 0) = (1 - 1/\lbd_{10}, 0), \vspace*{2pt} \\
    p_2 = (a_2,1) = (1-1/\lbd_{20},1), & \quad p_3 = (0, a_3) = (0,1-\delta/\lbd_a). \eay$$
 Except for some corner cases, these are the only possible equilibria on the boundary of $\Lbd$.
 We define also two conditions on parameters:
 $$\begin{array}{cl}
   \hbox{(AinvU)}: & \lbd_{20}(1-a_1) + \lbd_{21}a_1 > 1 + \delta \vspace*{2pt} \\
   \hbox{(UinvA)}: & \lbd_{10}(1-a_2) - \lbd_{21}a_2 > 1. \end{array} $$
 The meaning of these two conditions is as follows:
\begin{itemize}
 \item (AinvU) stands for ``associated invades unassociated'', and is relevant if and only if~$\lambda_{10} > 1$, in which case it corresponds
               to parameter values for which a small introduction of associated hosts in a stable population of unassociated hosts leads to an increase
               in the proportion of associated hosts.
               Equivalently, $G_2 (p_1 + \ep e_2) > 0$ for small~$\ep > 0$, where~$e_2 = (0, 1)$. \vspace*{4pt}
 \item (UinvA) stands similarly for ``unassociated invades associated'', and is relevant if and only if the condition~$\lbd_{20}>1$
               and~$\delta=0$ is satisfied.
\end{itemize}
For $x \in \Lbd$ let $t\mapsto \phi(t,x)$ denote the solution to \eqref{eq:chgMF} with initial value $x$. 
The following result is proved in the companion paper~\cite{scp-mf}. We omit some details in the bistability case, since it is not the focus here.
\begin{theorem}\label{th:MF}
 The following six cases include all parameter values. \\
 There are two special cases.
\begin{enumerate}
 \item \underline{Redundant symbiont}. (RS)
        Suppose~$\max (\lbd_{10}, \lbd_{20}) > 1$ and~$\delta = \lbd_a = \lbd_{21} = 0$.
        For all~$x \in \lbd_+$, $\lim_{t \to \infty} \phi_1 (t, x) = a_1$ and~$t\mapsto \phi_2 (t, x)$ is constant. \vspace*{4pt}
 \item \underline{Bistability}. (B)
        There may be up to two locally stable equilibria.
        This occurs for some but not all parameter values satisfying~$\min (\lbd_a, \lbd_b) > 0$ and either
\begin{enumerate}[noitemsep,label=\roman*)]
 \item $\lbd_{10}\le 1 < \lbd_{20} < 1+\delta$ or \vspace*{2pt}
 \item $\lbd_{10}>1$ and (AinvU) does not hold.
\end{enumerate}
\end{enumerate}
 Suppose~(RS) and~(B) do not hold.
 Then, there exists~$\bar x \in \Lbd$ such that~$\lim_{t \to \infty} \phi(t, x) =\bar x$ for all~$x \in \Lbd_+$. 
 Assuming~(RS) and~(B) do not hold, four cases are possible.
\begin{enumerate}
\item \underline{Extinction}. (E) $\bar x = (0,\max(0,a_3))$ if~$\lbd_{10}\le 1$ and $\lbd_{20} \le 1+\delta$. \vspace*{4pt}
\item \noindent\underline{Survival and coexistence of associated and unassociated host}. (C)
  $$ \max(0,\min(a_1,a_2)) < \bar x_1 < \max(a_1,a_2) \quad \hbox{and} \quad \max(0,a_3) < \bar x_2 < 1 $$
 in the following cases:
\begin{enumerate}[noitemsep]
\item $\delta>0$ and either
\begin{enumerate}[noitemsep,label=\roman*)]
\item $\lbd_{10}\le 1$ and $\lbd_{20}>1+\delta$, or \vspace*{2pt}
\item $\lbd_{10}>1$ and (AinvU) holds, or
\end{enumerate}
\item $\delta=0$ and either
\begin{enumerate}[noitemsep,label=\roman*)]
\item $\lbd_{10}\le 1$, $\lbd_{20}>1$ and (UinvA) holds, \vspace*{2pt}
\item $\lbd_{10}>1$, $\lbd_{20}\le 1$ and (AinvU) holds, or \vspace*{2pt}
\item $\min(\lbd_{10},\lbd_{20})>1$, (AinvU) holds and (UinvA) holds.
\end{enumerate}
\end{enumerate}
\item \underline{Survival of unassociated host only.} (UH) $\bar x = p_1$ if $\lbd_{10}>1$ and (AinvU) does not hold. \vspace*{4pt}
\item \underline{Survival of associated host only.} (AH) $\bar x = p_2$ if $\delta=0$, $\lbd_{20}>1$ and (UinvA) does not hold.
\end{enumerate}
\end{theorem}
 Before continuing we make a couple of observations concerning this result.
 First of all, (RS) occurs only for a single choice of parameters.
 Moreover, (B) occurs only when we have~$\lbd_a>0$ and~$\lbd_b>0$, which corresponds to a mutualist whose rate of spread through the population exceeds
 the increase it provides to the host birth rate.
 Aside from (RS) and (B), four behaviours are possible: the host goes extinct (and thus also the symbiont) (E), the host survives but not the symbiont (UH),
 both host and symbiont survive with coexistence of associated and unassociated hosts (C), or the host survives and the symbiont spreads completely through
 the host population (AH).
 In each case, the conditions are straightforward: extinction occurs if the birth rate is too low, hosts survive without symbiont if the symbiont cannot
 invade the host in equilibrium, etc. \\


\noindent {\bf Spatial stochastic process.}
 We can show that the spatial stochastic process exhibits the four main regimes identified above for the mean-field equations.
 Notice that another way to describe these four regimes is as follows: both unassociated hosts (type 1) and associated hosts (type 2) can either survive or go extinct.
 Since, for an interacting particle system, there is more than one notion of survival, we distinguish the two notions that we use.
\emph{Single-site survival} of type~$i$ means that, for some initial configuration~$\xi$ with a positive and finite number of type~$i$ individuals (or if~$i=1$ and
 $\delta>0$, at least one occupied site),
 $$ P (\forall t > 0, \exists x : \xi_t (x) = i \,| \,\xi_0 = \xi)>0. $$
 The other notion of survival of type $i$ is that starting from a translation-invariant distribution that almost surely has an infinite number of type $i$ individuals, for all $x \in \Z^d$,
 $$\liminf_{t\to\infty}P(\xi_t(x)=i)>0.$$
 For the basic contact process discussed below, these two notions are known to coincide, a fact that follows from the model's self-duality (see~\cite{liggett_1985} for details).
 Note that when the recovery rate~$\delta > 0$, survival of associated hosts implies coexistence of associated and unassociated hosts. \\
\indent As noted above, the stacked contact process obtained by setting~$\lambda_{10} = \lambda_{20}$ has several nice properties including attractiveness and monotonicity.
 When $\lbd_{10} \ne \lbd_{20}$ it is still possible to have these properties, but only in certain cases.
 In this article we are not focused on the parameter regimes where attractiveness and monotonicity are present, except in the simpler subcases (covered in this
 section) where an easy comparison to an already-studied process can be used.
 For the sake of the interested reader who wishes to make a further study of this process, we note, without proof, some other cases, not considered in detail in
 this article, for which we have some monotonicity. \\
\indent Recall that a process is \emph{attractive} with respect to a partial order on configurations if for any~$\xi \leq \xi'$ and two copies~$\xi_t,\xi_t'$ of the
 process with~$\xi_0 = \xi, \xi_0' = \xi'$ there is a coupling of the two processes with the property that~$\xi_t \le \xi_t'$ for all~$t>0$.
 A process is \emph{monotone increasing} with respect to a parameter~$\rho$ if the above property holds when~$\xi_t,\xi_t'$ have respective parameter
 values~$\rho \le \rho'$, and \emph{monotone decreasing} if~$\rho \ge \rho'$.
 We focus on partial orders induced by a sitewise order on types, that is, $\xi \le \xi'$ if and only if~$\xi(x) \le \xi'(x)$ for all~$x \in \Z^d$.
 \begin{enumerate}
 \item $\lbd_{10}>\lbd_{20}$.
 \begin{enumerate}
 \item $\lbd_{21}=0$. Attractive for the order $0,2<1$ and monotone increasing in~$\lbd_{10},\delta$.
 \item $\lbd_{21}>0$. Not attractive for any order with $0<1$.
 \end{enumerate}
 \item $\lbd_{20}>\lbd_{10}$.
 \begin{enumerate}
 \item $\lbd_{20}>\lbd_{21}$.
 \begin{enumerate}
 \item $\delta>0$. Not attractive for any order with~$0<2$.
 \item $\delta=0$. Attractive for the order $0,1<2$ and monotone increasing in~$\lbd_{20},\lbd_{21}$.
 \end{enumerate}
 \item $\lbd_{20}=\lbd_{21}$. Type~2 sites give the basic contact process (described below) with birth rate~$\lbd_{20}$ and death rate~$1+\delta$.
 \item $\lbd_{21}>\lbd_{20}$. Attractive for the order $0<1<2$, monotone increasing in~$\lbd_{10},\lbd_{20},\lbd_{21}$ and monotone decreasing in $\delta$.
 \end{enumerate}
 \end{enumerate}
 Before getting into the detailed analysis, we first use couplings to compare the process with other popular interacting particle systems and collect some basic results.
 We start by comparing the process with the basic and the multitype contact processes using simple coupling techniques.
 We also show that the process inherits some of the properties of the forest fire model though, because of the lack of monotonicity, this does not simply
 follow from a standard coupling argument.


\indent In the limiting case when the recovery rate~$\delta = \infty$, all the symbionts die instantaneously so the host dynamics reduces to the basic contact process
 $$ \begin{array}{rclcrcl}
     0 \ \to \ 1 & \hbox{at rate} & \lambda_{10} \,f_1 (x, \xi) & \quad & 1 \ \to \ 0 & \hbox{at rate} & 1. \end{array} $$
 There is a critical value~$\lambda_c \in (0, \infty)$ such that above~$\lambda_c$ the host population survives whereas at and below~$\lambda_c$ the population
 goes extinct~\cite{bezuidenhout_grimmett_1990}.
 This is in qualitative agreement with the mean-field equations.
 Assume from now on that the recovery rate is finite and let
 $$ \xi_t^1 (x) = \ind \{\xi_t (x) \neq 0 \} \quad \hbox{and} \quad \xi_t^2 (x) = \ind \{x \in \Z^d : \xi_t (x) = 2 \} \quad \hbox{for all} \quad x \in \Z^d $$
 be the process that keeps track of the hosts and the process that keeps track of the hosts associated to a symbiont, respectively.
 The transitions for the first process satisfy
 $$ \begin{array}{rcr}
     0 \ \to \ 1 & \hbox{at rate at least} & \min \,(\lambda_{10}, \lambda_{20}) \,f_1 (x, \xi^1) \vspace*{2pt} \\
     0 \ \to \ 1 & \hbox{at rate at most}  & \max \,(\lambda_{10}, \lambda_{20}) \,f_1 (x, \xi^1) \end{array} $$
 while~$1 \to 0$ at rate one.
 In particular, this process can be coupled with the basic contact process described above to deduce that, for all~$x \in \Z^d$
and starting from a translation-invariant distribution with infinitely many 1s and 2s,
  \begin{eqnarray}
  \liminf_{t \to \infty} \,P (\xi_t (x) \neq 0) > 0 & \hbox{when} & \min \,(\lambda_{10}, \lambda_{20}) > \lambda_c \label{eq:cp-1a} \\
     \lim_{t \to \infty} \,P (\xi_t (x) \neq 0) = 0 & \hbox{when} & \max \,(\lambda_{10}, \lambda_{20}) \leq \lambda_c. \label{eq:cp-1b}
  \end{eqnarray}
 This follows from Theorem~III.1.5 in~\cite{liggett_1985}, which applies to general two-state spin systems, together with obvious inequalities relating the transition rates of our
 process and their counterpart for the basic contact process.
 Similarly, the transitions for the second process satisfy
 $$ \begin{array}{rcr}
     0 \ \to \ 1 & \hbox{at rate at least} & \min \,(\lambda_{20}, \lambda_{21}) \,f_1 (x, \xi^2) \vspace*{2pt} \\
     0 \ \to \ 1 & \hbox{at rate at most}  & \max \,(\lambda_{20}, \lambda_{21}) \,f_1 (x, \xi^2) \end{array} $$
 while~$1 \to 0$ at rate~$1 + \delta$, from which it follows that, for all~$x \in \Z^d$
and starting from a translation-invariant distribution with infinitely many 1s and 2s,
  \begin{eqnarray}
  \liminf_{t \to \infty} \,P (\xi_t (x) = 2) > 0 & \hbox{when} & \min \,(\lambda_{20}, \lambda_{21}) > (1 + \delta) \,\lambda_c \label{eq:cp-2a} \\
     \lim_{t \to \infty} \,P (\xi_t (x) = 2) = 0 & \hbox{when} & \max \,(\lambda_{20}, \lambda_{21}) \leq (1 + \delta) \lambda_c. \label{eq:cp-2b}
\end{eqnarray}
 The four parameter regions in~\eqref{eq:cp-1a}--\eqref{eq:cp-2b} are illustrated in the diagrams of Figures~\ref{fig:3D-host} and~\ref{fig:3D-symb}.
 So far, the behavior of the stochastic process agrees with the behavior of the mean-field model described in Theorem~\ref{th:MF}, if we think of the mean-field model as having~$\lambda_c = 1$ --
 note that $\min \,(\lambda_{20}, \lambda_{21}) > 1 + \delta$ implies (AinvU) holds.


\indent Setting~$\lambda_{21} = \delta = 0$, the process reduces to the multitype contact process completely analyzed when the death rates are equal in~\cite{neuhauser_1992}.
 The transition rates become
 $$ \begin{array}{rclcrcl}
     0 \ \to \ 1 & \hbox{at rate} & \lambda_{10} \,f_1 (x, \xi) & \quad & 1 \ \to \ 0 & \hbox{at rate} & 1 \vspace*{2pt} \\
     0 \ \to \ 2 & \hbox{at rate} & \lambda_{20} \,f_2 (x, \xi) & \quad & 2 \ \to \ 0 & \hbox{at rate} & 1. \end{array} $$
 In this case, the type with the larger birth rate outcompetes the other type provided its birth rate is also strictly larger than the critical value of the single-type contact process.
 This result has first been proved in~\cite{neuhauser_1992} using duality techniques and again in~\cite{durrett_neuhauser_1997} using also a block construction in two dimensions to prove that the long-term behavior of the process
 is not altered by small perturbations of the parameters.
 In particular, using a similar perturbation argument, it can be deduced from~\cite[Propositions~3.1--3.2]{durrett_neuhauser_1997} that, for all~$x \in \Z^2$ and
 regardless of the initial configuration,
\begin{equation}
\label{eq:mcp}
  \lim_{t \to \infty} \,P (\xi_t (x) = 2) = 0 \quad \hbox{when} \quad \lambda_{10} > \lambda_{20} \ \hbox{and} \ \lambda_{21}, \delta \ \hbox{are small}.
\end{equation}
 The parameter region in~\eqref{eq:mcp} is shown in the top diagram of Figure~\ref{fig:3D-symb}.
 Also, it agrees with the mean-field equations.
 To see this, notice that if~$\lbd_{10} > \lbd_{20}$ are fixed and~$\lbd_{21} = \delta = 0$ then~(AinvU) reduces to~$\lbd_{20}/\lbd_{10} > 1$ which does not hold,
 so~(AinvU) still does not hold if~$\delta,\lbd_{21}>0$ are small.
 Since~$\lbd_{10}>\lbd_{20}$, (B) does not hold and~(RS) does not hold, so we have either~(E) or~(UH), depending on the values of~$\lbd_{10}$.
 In either case, Theorem \ref{th:MF} gives~$x_2 \to 0$ so associated hosts do not persist.


\indent The forest fire model, also referred to as epidemics with recovery, is the three-state spin system with a cyclic dynamics described by the following three transitions:
 $$ \begin{array}{rclcrcl}
     0 \ \to \ 2 & \hbox{at rate} & \alpha \,f_2 (x, \xi) & \quad & 2 \ \to \ 1 & \hbox{at rate} & 1 \vspace*{2pt} \\
     1 \ \to \ 0 & \hbox{at rate} & \beta. \end{array} $$
 The three states are interpreted as~0 = alive, 2 = on fire and 1 = burnt, but can also be thought respectively as healthy, infected and immune in the context of epidemics.
 This process has been studied in~\cite{durrett_neuhauser_1991}, but note that we have interchanged the roles of the two states~1 and~2 to facilitate the comparison with our model.
 The main result in~\cite{durrett_neuhauser_1991} shows the existence of a critical value~$\alpha_c \in (0, \infty)$ such that, regardless of the value of~$\beta > 0$ and starting from
 a translation-invariant distribution with infinitely many 1s and 2s in two dimensions,
 $$ \liminf_{t \to \infty} \,P (\xi_t (x) = 2) > 0 \quad \hbox{when} \quad \alpha > \alpha_c. $$
 Because the dynamics is cyclic, basic couplings between the forest fire model and our process do not lead to any useful stochastic ordering between the two systems.
 However, the proof in~\cite{durrett_neuhauser_1991} easily extends to our process in a certain parameter region.
 Indeed, in addition to general geometrical properties and percolation results which are not related to the specific dynamics of the forest fire model, the key estimates in~\cite{durrett_neuhauser_1991}
 rely on the following two ingredients:
\begin{itemize}
 \item[(a)] The set of burning trees dominates its counterpart in the process with no regrowth ($\beta = 0$) provided both processes start from the same configuration. \vspace*{4pt}
 \item[(b)] In regions that have not been on fire for at least~$S$ units of time, the set of trees which are alive dominates a product measure with density~$1 - e^{- \beta S}$.
\end{itemize}
 Now, fix~$\delta \geq 0$, let~$\beta = 1 / (\delta + 1)$ and consider the spin system on the two-dimensional integer lattice whose dynamics is described by the five transitions
 $$ \begin{array}{rclcrcl}
     0 \ \to \ 2 & \hbox{at rate} & \beta \lambda_{20} \,f_2 (x, \xi) & \quad & 1 \ \to \ 0 & \hbox{at rate} & \beta \vspace*{2pt} \\
     1 \ \to \ 2 & \hbox{at rate} & \beta \lambda_{21} \,f_2 (x, \xi) & \quad & 2 \ \to \ 0 & \hbox{at rate} & \beta \vspace*{2pt} \\
                                                                           &&&& 2 \ \to \ 1 & \hbox{at rate} & \beta \delta. \end{array} $$
 Note that this is the process~\eqref{eq:transitions} with~$\lambda_{10} = 0$ slowed down by the factor~$\beta$.
 Alternatively, one can see this process as the forest fire model modified so that burnt trees can catch fire ($1 \to 2$) and trees on fire can spontaneously change
 to living trees ($2 \to 0$), skipping the burnt phase.
 In this process, trees burn for an exponential amount of time with rate~$\beta + \beta \delta = 1$ as in the original forest fire model.
 It follows that the domination property~(a) remains true: the set of burning trees in this new process dominates its counterpart in the forest fire model with
 no regrowth and in which the fire spreads by contact at rate~$\alpha = \beta \lambda_{20}$.
 Since the transition~$1 \to 0$ again occurs spontaneously at rate~$\beta$, the domination of the product measure~(b) remains true as well.
 In particular,
starting from a translation-invariant distribution with infinitely many 1s and 2s,
 $$ \liminf_{t \to \infty} \,P (\xi_t (x) = 2) > 0 \quad \hbox{when} \quad \lambda_{20} > (\delta + 1) \,\alpha_c. $$
 This holds for all~$\lambda_{21} \geq 0$.
 Since the proof in~\cite{durrett_neuhauser_1991} is based on a block construction, which supports small perturbations of the system, we also obtain coexistence in the process~\eqref{eq:transitions}
 under the same assumptions and provided~$\lambda_{10}$ is sufficiently small.
 In conclusion, in~$d = 2$
and starting from a translation-invariant distribution with infinitely many 1s and 2s,
\begin{equation}
 \label{eq:ffm}
   \liminf_{t \to \infty} \,P (\xi_t (x) = 2) > 0 \quad \hbox{when} \quad \lambda_{20} > (\delta + 1) \,\alpha_c \ \hbox{and} \ \lambda_{10} \ \hbox{is small}.
\end{equation}
 The parameter region in~\eqref{eq:ffm} is shown in the two diagrams at the bottom of Figures~\ref{fig:3D-host} and~\ref{fig:3D-symb}.


\begin{figure}[t!]
\centering
\scalebox{0.40}{\input{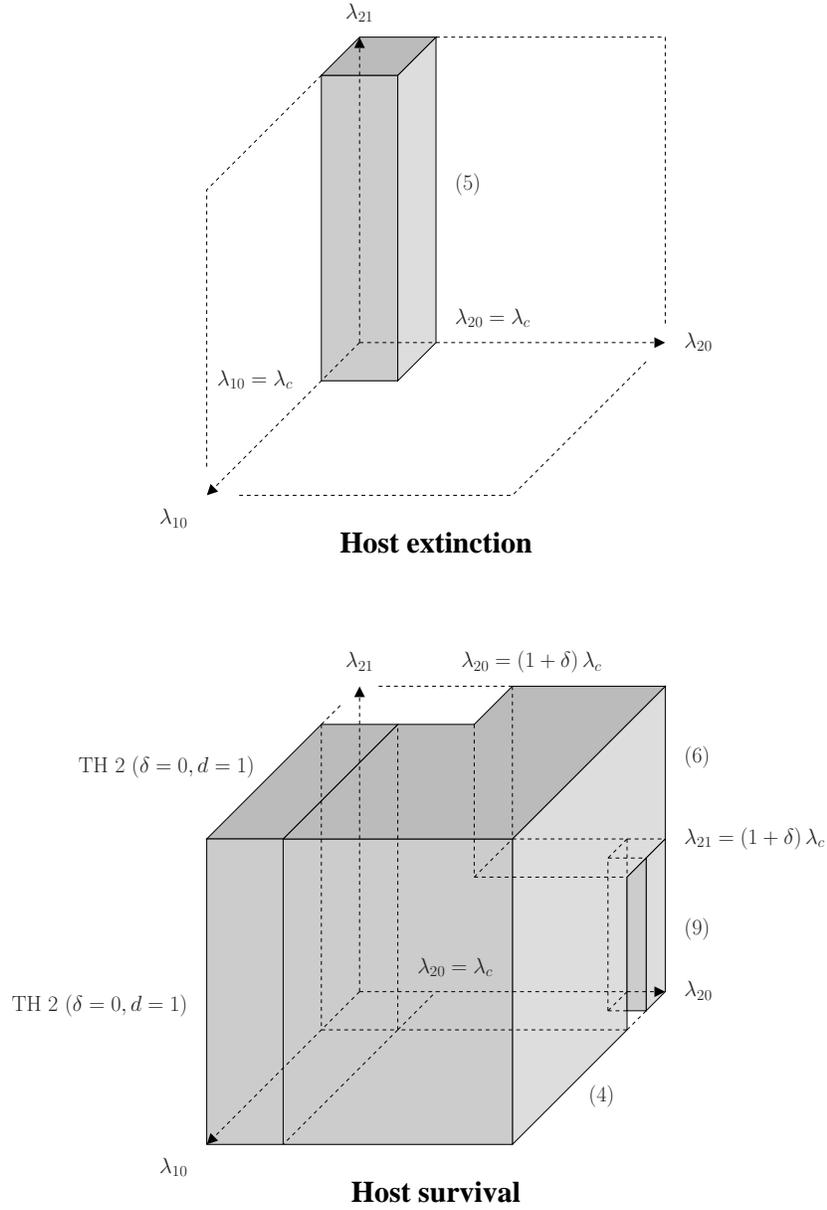}}
\caption{\upshape{Parameter regions in which the host dies out/survives.}}
\label{fig:3D-host}
\end{figure}

\begin{figure}[t!]
\centering
\scalebox{0.40}{\input{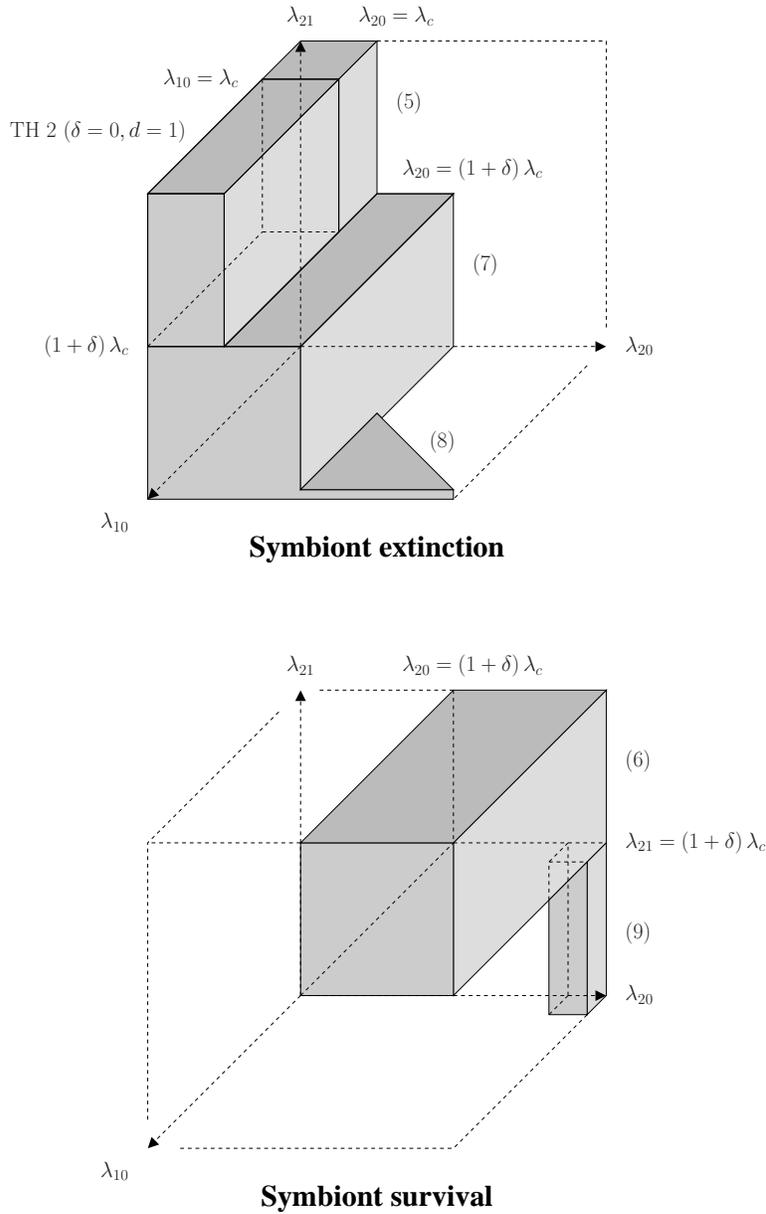}}
\caption{\upshape{Parameter regions in which the symbiont dies out/survives.}}
\label{fig:3D-symb}
\end{figure}

\indent We now focus on the parameter region where~$\lambda_{10} > \lambda_c > \lambda_{20}$ which is not covered by the previous comparison results.
 In this case, the symbiont is a pathogen.
 Standard coupling arguments to compare the host-pathogen system with the basic contact process imply that a population of healthy (unassociated) hosts survives whereas,
 if the recovery rate~$\delta = 0$, a population of infected (associated) hosts dies out.
 The long term behavior when starting with a mixture of healthy and infected hosts is not clear, and the main question is whether associated and
 unassociated hosts coexist.
 Theorem~\ref{th:MF} says that in the mean-field model they do coexist provided $\lambda_{21}$ is sufficiently large.
 For the spatial model, if the recovery rate is positive and the birth rate~$\lambda_{10}$ of healthy hosts is so large that the set of sites occupied by the hosts
 percolates, i.e., contains an infinite connected component, then it is expected that hosts and pathogens coexist provided the infection rate is reasonably large.
 But note that percolation of the set of hosts is only possible in~$d > 1$.
 In fact, we can prove that, in one dimension, even when~$\delta = 0$ and the infection rate and the birth rate of healthy hosts are very large, the pathogen is
 unable to survive.
\begin{theorem} --
\label{th:1D}
 Assume that~$\lambda_{10} > \lambda_c > \lambda_{20}$ and~$\delta = 0$ and~$d = 1$.
 Then, starting from any configuration with infinitely many vertices in state~1,
 $$ \liminf_{t \to \infty} \,P (\xi_t (x) = 1) > 0 \quad \hbox{and} \quad \lim_{t \to \infty} \,P (\xi_t (x) = 2) = 0 \quad \hbox{for all} \quad x \in \Z. $$
\end{theorem}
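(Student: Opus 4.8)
The plan is to treat the two conclusions separately, the extinction of the pathogen being the heart of the matter: once state~2 is shown to die out locally, survival of the healthy hosts will follow from the supercriticality $\lambda_{10} > \lambda_c$. First I would reduce the extinction statement to the claim that, started from a configuration with \emph{finitely} many $2$'s, the set $A_t := \{x : \xi_t(x) = 2\}$ becomes empty almost surely; the case of infinitely many $2$'s is then recovered by a finite-range comparison in the graphical representation, since the evolution of $A_t$ near a site only feels the configuration in a slowly growing neighbourhood. The structural fact I would lean on, and which genuinely uses $\delta = 0$, is that a $2$ can only turn into a $0$: hence, away from the healthy population, the pathogen reproduces solely through $0 \to 2$ at the \emph{subcritical} rate $\lambda_{20} < \lambda_c$, and the single mechanism by which the pathogen conquers fresh territory is the infection $1 \to 2$ of a healthy host adjacent to it.

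Next I would set up the interface analysis, which is where dimension $d = 1$ enters. Let $R_t = \max A_t$ be the rightmost pathogen (the leftmost is symmetric). The healthy hosts strictly to the right of $R_t$ have not yet been touched, so up to the interaction at the single boundary site they evolve as an autonomous supercritical contact process $\eta_t$ with parameter $\lambda_{10} > \lambda_c$. The geometric point is that the territory still to be conquered is a half-line, and $\eta_t$, having density strictly below one, almost surely presents \emph{gaps} — maximal runs of empty sites separating consecutive host clusters. To advance, the front $R_t$ must repeatedly cross such a gap, and inside a gap it can only progress through the subcritical birth $0 \to 2$ or by waiting, while dying at rate one, for the host cluster on the far side to refill the gap and come within infection range.

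The key estimate I would then prove is that each gap crossing fails with probability bounded below by a constant $q > 0$: with probability at least $q$ the front $2$, together with the subcritical tail of $2$'s immediately behind it, dies out before the pathogen reaches the next host cluster, after which the unblocked supercritical population breaks through and recolonises the vacated interval permanently, the inequality $\lambda_{20} < \lambda_c$ guaranteeing that the pathogen cannot re-establish itself there. Granting this, a renewal / Borel--Cantelli argument finishes the proof: if the pathogen survived it would force $R_t \to +\infty$ and hence cross infinitely many gaps, so it would fail at one of them almost surely, a contradiction; thus $A_t = \emptyset$ eventually, almost surely, from a finite start, and $\lim_{t} P(\xi_t(x) = 2) = 0$ in general. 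Since the pathogen then dies out while the initial configuration retains infinitely many healthy hosts, the process eventually dominates a supercritical contact process of $1$'s with rate $\lambda_{10} > \lambda_c$, which yields $\liminf_{t} P(\xi_t(x) = 1) > 0$.

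The hard part will be the quantitative control of the interface precisely in the regime the theorem advertises, namely $\lambda_{10}$ and $\lambda_{21}$ large. Large $\lambda_{21}$ lets the front devour an adjacent host almost instantly, and large $\lambda_{10}$ lets a host cluster refill a gap very quickly, so both effects tend to shrink the per-gap failure probability; I would have to show it nonetheless stays bounded away from zero on the gaps actually encountered, using edge-speed and density estimates for the supercritical process $\eta_t$ together with the fact that a front stalled at a gap dies at the fixed rate one. Coupling the forward region cleanly with the autonomous $\eta_t$, and proving jointly that the front recedes \emph{and} that the subcritical tail behind it is extinguished so that the host breakthrough is permanent, are the delicate points; the remainder is bookkeeping in the graphical representation.
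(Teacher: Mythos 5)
Your overall strategy---exploit $\delta=0$ so that, away from the 1s, the pathogen is a subcritical contact process with parameter $\lambda_{20}<\lambda_c$, treat the 1s ahead of the front as an autonomous supercritical contact process, and run a renewal/Borel--Cantelli argument over interface encounters---is the same skeleton as the paper's segregated-case analysis (Lemmas~\ref{lemma1}--\ref{lemma2}, Propositions~\ref{prop1} and~\ref{prop3}). But there are two genuine gaps. First, the reduction of the general case to finitely many 2s fails as stated. Finite speed of propagation only lets you replace the initial configuration outside a window of radius $O(t)$, so the finite configurations you compare with grow with $t$, and your qualitative conclusion ``$A_t=\varnothing$ eventually, almost surely, from a finite start'' carries no uniformity in the number and position of the seeds; moreover, since the process is not attractive, truncating the initial data is not a one-sided comparison, so you cannot even assert that the truncated process has more 2s near $x$. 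What is needed is a quantitative, environment-uniform, per-seed estimate---the probability that the descendants of a single initial 2 survive to time $t$ is at most $Ce^{-ct}$, and that they ever travel distance $n$ is at most $Ce^{-cn}$, uniformly over \emph{all} initial configurations, including those with infinitely many 2s---which is exactly what the paper proves (Propositions~\ref{prop5} and~\ref{prop7}) and then sums over seeds. Second, your per-gap ``failure'' event is not a legitimate building block for the renewal: it requires the breakthrough of the 1s to be \emph{permanent}, i.e.\ that the 2s remaining behind never re-infect the 1s that crossed the gap, and this infinite-horizon interface event is precisely the quantity the whole proof is supposed to control (it is the paper's Lemma~\ref{lemma1}, $P_\xi(\tau=\infty)\ge p$, proved by coupling with an \emph{independent} subcritical contact process whose rightmost occupied site recedes exponentially fast by~\cite{griffeath_1981}, raced against the at most linear advance of the 1s). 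Worse, when there are infinitely many 2s behind the front, ``the subcritical tail immediately behind it dies out'' is not even a finite-time event, so failure would have to be localized through something like the descendant sets; as written, the argument is circular.

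The host-survival conclusion is also too quick. ``The process eventually dominates a supercritical contact process of 1s'' is not available: infections $1\to 2$ act as extra deaths on the 1s, the model has no useful monotonicity, and---more to the point---by the (random, local) time the 2s near a site $x$ have died out, the 1s near $x$ may be gone as well, so survival must come from recolonization by a distant surviving cluster. The paper needs a separate renewal over clusters of 1s (Lemma~\ref{lemma8} and the proof of~\eqref{eq:1D-part2}): it produces a random space-time point $(X,T)$ whose cluster is never touched by 2s, on which the process is coupled to the contact process $\zeta_t$ started from all 1s, and whose edges spread linearly by~\cite{durrett_1980}; only because this coupled region grows linearly does it eventually contain every fixed $x$, giving $\liminf_{t\to\infty} P(\xi_t(x)=1)>0$ for all $x$. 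Without an analogue of this step, your argument yields at best extinction of the 2s, not the first half of the theorem.
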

 The parameter region covered in Theorem~\ref{th:1D} is illustrated in the phase diagrams of Figures~\ref{fig:3D-host} and~\ref{fig:3D-symb}.
 Also, in addition to the statement of the theorem, our proof gives specific estimates on the rate of extinction of the pathogens and the rate of expansion of
 the healthy hosts.
 The first part of the proof shows that there exists a constant~$c > 0$ such that, uniformly in all initial configurations~$\xi_0$ with infinitely many~1s and
 for any site~$x$,
\begin{equation}
\label{eq:1D-part1}
  P (\sup \,\{t : \xi_t (y) = 2 \ \hbox{for some} \ y \ \hbox{such that} \ |y - x| \leq e^{ct} \} < \infty) = 1.
\end{equation}
 In other words, there exists a uniform (over all sites) exponentially growing (in time) neighborhood of any site which is eventually void of pathogens.
 To describe the long-term behavior of the healthy hosts, let~$\zeta_t$ denote the one-dimensional nearest-neighbor (supercritical) contact process with parameter~$\lambda_{10}$ starting from the all-one configuration.
 Also, let~$\alpha > 0$ denote the edge speed in this contact process as defined in~\cite{durrett_1980}; that is, starting from the initial
 configuration~$\xi_0 (x) = \ind \{x \le 0 \}$,
 $$\alpha = \lim_{t\to\infty}t^{-1}\sup\{x:\xi_t(x) \ne 0\}.$$
 Then, under the assumptions of the theorem, there exist
\begin{itemize}
 \item a random site~$X$ and an almost surely finite time~$T$ that depend on~$\xi$, \vspace*{2pt}
 \item site-valued processes~$\ell_t \leq r_t$ defined for $t \geq T$ and satisfying~$\ell_T = r_T = X$, \vspace*{2pt}
 \item a coupling of the processes~$\xi_t$ and~$\zeta_t$
\end{itemize}
 such that, $P$-almost surely,
\begin{equation}
\label{eq:1D-part2}
  \begin{array}{l}
  \lim_{t \to \infty} - \ell_t / t = \lim_{t \to \infty} r_t / t = \alpha \vspace*{4pt} \\ \hspace*{25pt}
  \hbox{and} \quad \xi_t (x) = \zeta_t (x) \ \hbox{for all} \ (x, t) \in [\ell_t, r_t] \times [T, \infty). \end{array}
\end{equation}
 In other words, as long as~$\xi$ has an infinite number of~1s then, $P$-almost surely, eventually there arises a stable population of~1s that behaves like the basic contact process on an interval that grows linearly in time.
 From~\eqref{eq:1D-part1}--\eqref{eq:1D-part2}, we also obtain a complete convergence theorem.
 Indeed, letting~$\nu$ denote the upper invariant measure of the contact process~$\zeta_t$ and~$\delta_0$ denote the measure that concentrates on the all-zero configuration, since the distribution of the contact process converges
 weakly to~$\nu$~\cite[Ch. VI]{liggett_1985}, we deduce the following for the distribution~$\mu_t$ of the process~$\xi_t$.
\begin{corollary} --
 Let~$\lambda_{10} > \lambda_c > \lambda_{20}$ and~$\delta = 0$ and~$d = 1$. Then, as~$t \to \infty$,
 $$ \mu_t \Rightarrow \rho \delta_0 + (1 - \rho) \nu \quad \hbox{where} \quad \rho = P_{\mu_0} (\{x : \xi_t (x) = 1 \}\neq \varnothing \ \hbox{for all} \ t > 0). $$
 In particular, all invariant measures are convex combinations of $\delta_0$ and $\nu$.
\end{corollary}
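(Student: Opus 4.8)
The plan is to establish convergence of the finite-dimensional distributions, which on the compact space $\{0,1,2\}^{\Z}$ is equivalent to the asserted weak convergence. It therefore suffices to fix a finite window $A \subset \Z$ and a local configuration $\eta \in \{0,1,2\}^A$ and to compute the limit of $P_{\mu_0}(\xi_t|_A = \eta)$. I would obtain this limit by conditioning on the survival event $S$, namely that some site is in state $1$ at every time $t > 0$, and on its complement, showing that the contribution of $S^c$ is the atom at the empty configuration $\delta_0$ while that of $S$ is the upper invariant measure $\nu$.

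On the event $S^c$ there is an almost surely finite time past which no site is in state $1$. Since $\delta = 0$, a site in state $2$ can never turn into a $1$, and in the absence of any $1$ no new $1$ can be created; hence from that time on $\xi_t$ evolves as a pure contact process of $2$'s with birth rate $\lambda_{20} < \lambda_c$. Being subcritical, this process dies out, so $\xi_t \to \mathbf{0}$ on $S^c$ and the conditional law of $\xi_t|_A$ converges to $\delta_0$ restricted to $A$.

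On the event $S$ I would invoke the coupling furnished by \eqref{eq:1D-part2}: there exist a founding site $X$, an almost surely finite time $T$, and a coupling of $\xi_t$ with the supercritical contact process $\zeta_t$ such that the two processes agree on the interval $[\ell_t, r_t]$, whose endpoints obey $-\ell_t/t \to \alpha$ and $r_t/t \to \alpha$ with $\alpha > 0$. Since $[\ell_t, r_t]$ eventually engulfs the fixed window $A$, we get $\xi_t|_A = \zeta_t|_A$ for all large $t$; in particular $\xi_t|_A$ carries no $2$ in the limit, in agreement with \eqref{eq:1D-part1}, and because $\zeta_t \Rightarrow \nu$ by \cite[Ch.~VI]{liggett_1985} the conditional law of $\xi_t|_A$ converges to $\nu$ restricted to $A$. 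Collecting the two contributions, weighted by $P_{\mu_0}(S^c)$ and $P_{\mu_0}(S)$, reproduces the claimed convex combination of $\delta_0$ and $\nu$; taking $\mu_0$ invariant, so that $\mu_t \equiv \mu_0$, then forces every invariant measure to be of this form.

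The main obstacle is that \eqref{eq:1D-part1}--\eqref{eq:1D-part2} are stated for initial configurations with infinitely many $1$'s, whereas the theorem must accommodate an arbitrary initial law $\mu_0$, including ones supported on finite configurations for which $S$ is a genuinely nontrivial event. The gap is to produce the founder $(X, T)$ on $S$ when the process starts from finitely many hosts: I would argue that, conditional on $S$, the population of $1$'s cannot remain confined to a bounded interval, since a bounded configuration has a uniformly positive probability of dying out completely over each bounded time window and so would do so almost surely, contradicting $S$. Consequently, after a finite random restart time the configuration exhibits a long block of $1$'s from which the linear-growth coupling of \eqref{eq:1D-part2} can be launched. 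Verifying this restart argument, and checking that the density estimate underlying \eqref{eq:1D-part2} is not disturbed by it, is where the real work lies; granting it, the convergence of the local distributions is routine.
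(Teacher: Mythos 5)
Your overall route coincides with the paper's: the paper proves this corollary in a single sentence, deducing it from \eqref{eq:1D-part1}--\eqref{eq:1D-part2} together with weak convergence of the supercritical contact process to~$\nu$, and your decomposition over the survival event~$S$ and its complement is exactly the fleshed-out version of that deduction. Your treatment of~$S^c$ is correct (since~$\delta = 0$ no~1 can ever be recreated once the~1s are gone, and the surviving~2s then form a contact process with rate~$\lambda_{20} < \lambda_c$, which dies out locally), as is the engulfing argument on~$S$ when the initial configuration has infinitely many~1s. One discrepancy you should have flagged: your (correct) computation yields~$\mu_t \Rightarrow (1 - \rho) \,\delta_0 + \rho \nu$ with~$\rho$ the survival probability of the~1s, whereas the paper's display puts the weights the other way around; the printed formula is a typo, so your argument does not literally ``reproduce the claimed convex combination'' --- it corrects it.

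The step you defer --- producing the founder~$(X, T)$ on~$S$ when there are only finitely many~1s --- is a genuine gap: the paper never addresses it, even though the clause about invariant measures requires arbitrary~$\mu_0$. But your sketch for closing it does not work as written. From ``on~$S$ the~1s cannot remain confined to a bounded interval'' you infer that ``the configuration exhibits a long block of~1s''; unconfinement controls the range of the~1s, not their occupancy pattern, so no such block follows --- and none is needed, because the bound~$P_{\xi} (\tau = \infty) \geq p$ of Lemma~\ref{lemma8} is uniform over all configurations containing a~1 (translate to put it at the origin). The correct repair is a restart scheme: launch an attempt from some existing~1; at each failure time restart from any~1 present at that moment --- on~$S$ one always exists, and if none exists you are on~$S^c$ --- and apply the strong Markov property with the uniform bound, so the first~$n$ attempts all fail with probability at most~$(1 - p)^n$; hence on~$S$ some attempt succeeds at an almost surely finite time, after which the argument for the infinite case takes over. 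When you carry this out you must also enlarge the notion of failure: in Lemma~\ref{lemma8}, the event~$\{\tau = \infty \}$ only says the tracked cluster never meets a~2, not that the cluster survives, and a cluster that dies on its own produces no linearly growing interval. So a restart must be triggered by cluster death as well as by collision, and the per-attempt success event must be ``no collision and the cluster survives,'' whose probability is bounded below by intersecting the non-collision estimate with the positive survival probability of a single-site supercritical cluster (these events live on the independent substructures used in the proof of Lemma~\ref{lemma8}, so the intersection bound is available). This same point is elided in the paper's own proof of~\eqref{eq:1D-part2}, so fixing it is worth doing explicitly.
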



\section{Graphical representation}
\label{sec:GR}

\indent Throughout the paper, we think of the process as being generated from a substructure, also called Harris' graphical representation~\cite{harris_1972}.
 The substructure consists of independent Poisson processes with appropriate rates attached to each vertex and oriented edge of the~$d$-dimensional integer lattice.
 The process is then constructed by assuming that, at the times of these Poisson processes, either a birth or an infection or a death or a recovery occurs whenever the configuration of the system at that time is compatible with the event.
 Table~\ref{tab:harris} shows how to construct the process using its substructure when~$\lambda_{10} > \lambda_{20}$, in which case the symbiont is a pathogen.

\begin{table}[t]
\begin{center}
\begin{tabular}{ccp{280pt}}
\hline \noalign{\vspace*{2pt}}
 rate                                  & symbol                             & effect on the process \\ \noalign{\vspace*{1pt}} \hline \noalign{\vspace*{6pt}}
 $ (\lambda_{10} - \lambda_{20}) / 2d$ & $x \overset{1}{\longrightarrow} y$ & birth at~$y$ when~$x$ is occupied by a healthy host and~$y$ is empty \\ \noalign{\vspace*{0pt}}
 $ \lambda_{20} / 2d$                  & $x \overset{2}{\longrightarrow} y$ & birth at~$y$ when~$x$ is occupied and~$y$ is empty \\ \noalign{\vspace*{0pt}}
 $ \lambda_{21} / 2d$                  & $x \overset{3}{\longrightarrow} y$ & infection at~$y$ when~$x$ is infected and~$y$ is occupied by a healthy host \\ \noalign{\vspace*{2pt}}
 1                                     & $\times$ at $x$                    & death at~$x$ when~$x$ is occupied \\ \noalign{\vspace*{2pt}}
 $\delta$                              & $\bullet$ at $x$                   & recovery at~$x$ when~$x$ is infected \\ \noalign{\vspace*{4pt}} \hline
\end{tabular}
\end{center}
\caption{\upshape{Graphical representation of the process when~$\lambda_{10} > \lambda_{20}$ (pathogen).
 The rates in the left column correspond to the different parameters of the independent Poisson processes, attached to either each oriented edge connected two neighbors (first three rows) or each vertex (last two rows).}}
\label{tab:harris}
\end{table}

\indent Note that the results in~\eqref{eq:cp-1a}--\eqref{eq:mcp} can be proved by coupling different processes using this graphical representation rather than~Theorem~III.1.5 in~\cite{liggett_1985}.
 For instance, the contact process~$\zeta_t^2$ with parameter~$\lambda_{20}$ can be constructed from the graphical representation in the table by assuming that births can only occur through type~2 arrows, while the contact process~$\zeta_t^1$
 with parameter~$\lambda_{10}$ can be constructed by assuming that births occur through both type~1 and type~2 arrows.
 Constructing our process and these two contact processes from this common graphical representation results in a coupling such that
 $$ \{x \in \Z^d : \zeta_t^2 (x) \neq 0 \} \subset \{x \in \Z^d : \xi_t (x) \neq 0 \} \subset \{x \in \Z^d : \zeta_t^1 (x) \neq 0 \} $$
 at all times~$t$ provided this holds at time zero.
 This shows~\eqref{eq:cp-1a}--\eqref{eq:cp-1b} when~$\lambda_{10} > \lambda_{20}$.
 This property when the inequality is reversed as well as~\eqref{eq:cp-2a}--\eqref{eq:cp-2b} and~\eqref{eq:mcp} are proved similarly by using other graphical representations which are designed based on the ordering of the parameters.

\section{Proof of Theorem~\ref{th:1D}}
\label{sec:1D}

\indent This section is devoted to the proof of~\eqref{eq:1D-part1}--\eqref{eq:1D-part2} which, together, imply Theorem~\ref{th:1D}. The first step is to obtain exponential bounds, in spacetime, on the the set of descendants (defined below in the natural way) of a type 2 individual, which is done in Proposition \ref{prop5}. To accomplish this we need two crucial observations described in a moment, together with an iterative or ``restart'' argument, and several estimates that build upon one another. We then show in Proposition \ref{prop7} that from any location, eventually the nearest type 2 individual will be at a distance which is exponentially far away as a function of time. This is then used to show that from an initial configuration with infinitely many type 1 sites, at least one of them will produce an set of type 1s growing linearly in time, none of which ever interact with a type 2, and completing the proof.\\

To obtain Proposition \ref{prop5}, the first crucial observation is the following asymmetry between sub- and super-critical contact processes. It is known (see for example \cite{griffeath_1981}) that for a (single-type) contact process on $\Z$ with half-line initial condition $\xi_0(x) = \ind(x \le 0)$ and defining the right edge $r_t:= \sup\{x:\xi_t(x) \ne 0\}$,
\begin{equation}
\begin{array}{rcll}
\label{eq:cont-asym}
\text{if} & \lambda>\lambda_c, & \text{eventually} & r_t \le Ct, \quad \text{while} \\
\text{if} & \lambda<\lambda_c, & \text{eventually} & r_t \le -e^{ct}, \end{array}
\end{equation}
where $c,C>0$ depend only on the value of $\lambda$ in each case. If $\lambda>\lambda_c$ we actually have $r_t/t \to \alpha(\lambda)$ but the above is the more pertinent fact here. In words, the invasion front of a supercritical contact process advances at most linearly, while the front for a subcritical contact process falls back exponentially fast. Thus if, in our process, we begin with $\xi_0(x) = \ind(x<0) + 2\ind(x>0)$, i.e., type 1 to the left of the origin and type 2 to the right, then if the right-hand boundary of type 1 and the left-hand boundary of type 2 do not meet within a short time, with high probability the two types will never interact, with the 2s vanishing rapidly while the 1s gradually advance. Naturally, this argument is also applicable if we start with a small patch of type 2s surrounded by type 1s.\\

The second observation is a comparison property that lets us reduce the study of the descendants of a type 2 site to the setting where there is a collection of 2s surrounded by 1s on either side. Namely, if in the initial configuration we replace \emph{all} 2s with 1s, then the resulting process has at least as many occupied sites as it did before. Given $\xi_t$ with $\xi_0=\xi$, if we define an auxiliary copy $\xi'$ on the same graphical representation, with initial configuration
$$\xi_0' = \ind \{\xi_0(y) \neq 0\},$$
then since $\lbd_{20} \le \lbd_{10}$ it follows that
 \beq\label{eq:comp-prop}
 \hbox{for all} \ t \ge 0, \ \{x:\xi_t(x)\ne 0\} \subseteq \{x:\xi_t'(x)=1\}.
 \eeq
Notice that the same is not true if we replace some but not all 2s with 1s, as can be seen by simple counterexamples. Next we define descendant and ancestor.
 Suppose~$\xi_s (x) = \xi_t (y) = i \ne 0$ for some~$x, y$ and~$s \leq t$.
 Then~$(y, t)$ is a \emph{descendant} of~$(x, s)$, and~$(x, s)$ is an \emph{ancestor} of~$(y,t)$ if either $(y,t)=(x,s)$ or if there are times and sites
 $$ s = t_0 < t_1 < \cdots < t_{k - 1} \leq t_k = t \quad \hbox{and} \quad x = x_1, x_2, \ldots, x_k = y $$
 such that the following two conditions hold:
\begin{itemize}
 \item For~$j = 1, 2, \ldots, k$, we have~$\xi_r (x_j) = i$ for all times~$r \in [t_{j - 1}, t_j]$. \vspace*{4pt}
 \item For~$j = 1, 2, \ldots, k - 1$, we have~$\xi_{t_j^-} (x_{j + 1}) \neq \xi_{t_j} (x_{j + 1}) = i$ as a result of a birth or infection event along the edge~$(x_j, x_{j + 1})$ at time~$t_j$.
\end{itemize}
For a set $S\subset \Z$ and $s\le t$, let
$$A(s,t;S) = \{y:(y,t) \ \hbox{is a descendant of} \ (x,s) \ \hbox{for some} \ x \in S\}.$$
Use the shorthand $A(s,t;x)$ for $A(s,t;\{x\})$ and $A_t(S)$ for $A(0,t;S)$, and for $i=1,2$ and a configuration $\xi$ let $S_i(\xi) =\{x:\xi(x)=i\}$. It follows from the definition of descendant that for $t\ge 0$ and $i=1,2$,
$$\{A(s,t;x) \colon x \in S_i(\xi_s)\} \quad \hbox{is a partition of} \quad S_i(\xi_t).$$
Thus to control $S_2(\xi_t)$, which is our goal, it's enough to get good bounds on $A_t(x)$ for $x \in S_2(\xi_0)$.
 Given $S$ disjoint from $S_1(\xi_0)$, let $A_t=A_t(S)$ and let~$\ell_t = \inf A_t$ and~$r_t = \sup A_t$ as well as
 $$ a_t = \sup \,\{x < \ell_t : \xi_t (x) \neq 0 \} \quad \hbox{and} \quad b_t = \inf\{x > r_t : \xi_t (x) \neq 0 \}. $$
 Also, let~$d_t = \ell_t - a_t$, ~$h_t = b_t - r_t$ and $m_t = \min(d_t,h_t)$.
 Figure~\ref{fig:descendant} gives an illustration of these quantities. 
 Notice that, since interactions are with nearest neighbours and $S\cap S_1(\xi_0)=\varnothing$ by assumption, it follows that $[\ell_t,r_t]\cap S_1(\xi_t) = \varnothing$ for $t\ge 0$, and a fortiori that
\beq\label{eq:semidesc}
A(s,t;[\ell_s,r_s]) = A_t \ \hbox{for} \ s\le t.
\eeq
Time intervals when $m_t=1$ we call \emph{invasion}, and when $m_t\ge 2$ we call \emph{struggle}. The basic recipe for controlling $A_t$ is to control the duration and extent of each invasion, and to show that each struggle results, with positive probability, in the rapid and total collapse of $A_t$.\\

We begin with struggle, which is the toughest to address -- in fact invasion will be surreptitiously taken care of in Proposition \ref{prop1}. In the next result we show that starting from a fixed initial configuration $\xi_0$, with positive probability, collapse of $A_t$ occurs before invasion, uniformly over finite intervals $S$ and $\xi_0$ such that $S_1(\xi_0)$ is disjoint from $S$. In addition, we show that if collapse occurs, then it is exponentially fast. This makes use of the comparison property \eqref{eq:comp-prop} as well as the asymmetry \eqref{eq:cont-asym}, and the fact that in the absence of invasion (that is, when $m_t>1$) the particles in $A_t$ do not interact with the particles outside $A_t$.
 Using these estimates and an iterative restarting argument, we can then prove Proposition~\ref{prop1}, which then easily leads to Propositions~\ref{prop3} and \ref{prop5}, at which point we have enough to tackle the proof of \eqref{eq:1D-part1} and \eqref{eq:1D-part2}.
 In the following proofs, $c$ and~$C$ are strictly positive constants such that the given statements hold for $c$ and all smaller values than $c$, or $C$ and all larger values than $C$, which will mean that $c, 1/C$ are allowed to decrease from step to step.
\begin{lemma} --
\label{lemma1}
There are~$p,c,C > 0$ so that, if $S\subset \Z$ is a finite interval, $S \cap S_1(\xi_0) = \varnothing$ and $m_0\ge 2$ then
$P(\tau = \infty) \ge p$ and for $t>0$,
$$P(r_s > r_0 - e^{cs}  \ \hbox{for some} \ s > t \ \hbox{and} \ \tau = \infty), \ P(t < \tau < \infty) \leq C \,e^{-ct}.$$
\end{lemma}
\begin{proof}
 Using a pair of independent substructures, define a pair of contact processes~$\zeta_t^1$ and~$\zeta_t^2$ with respective birth rates~$\lambda_{10}$ and~$\lambda_{20}$ and initial configurations 
 $$ \zeta_0^1 (x) = \ind(\xi_0(x) \ne 0, \ x \notin S) \quad \quad \hbox{and} \quad \quad  \zeta_0^2 (x) = \ind (\xi_0(x)=2, \ x \in S ).$$	
 Let
 $$\bay{rclrcl}
 \ell_t' &=& \inf\{y:\zeta_t^2(y) \ne 0\}, & r_t' &=& \sup\{y:\zeta_t^2(y) \ne 0\}\quad\hbox{and}\\
 a_t' &=& \sup\{y<\ell_t':\zeta_t^1(y) \ne 0\}, & b_t' &=& \inf\{y>r_t':\zeta_t^1(y) \ne 0\}.\eay$$
 Let $d_t' = \ell_t'-a_t'$, $h_t' = b_t'-r_t'$ and $m_t=\min(d_t',h_t')$ and let
 $$\tau' = \inf\{t:m_t'=1\}.$$
If a subset of $\Z$ does not contain, and is not adjacent to any occupied sites, that subset remains empty. Using this fact, there is a natural coupling of our process~$\xi_t$ with the two contact processes~$\zeta_t^1$ and~$\zeta_t^2$ obtained by defining~$\xi_t$ using both substructures up to time~$\tau$, then using only one substructure for all times~$t > \tau$.
Noting \eqref{eq:comp-prop}, this coupling has the property that for all~$t \le \tau$,
 $$A_t(\Z \setminus S) \subseteq \{x : \zeta_t^1 (x) = 1 \} \quad \hbox{and} \quad A_t(S) \subseteq \{x : \zeta_t^2 (x) = 1 \}.$$
It follows in particular that $a_t \le a_t'$, $\ell_t \ge \ell_t'$, $r_t \le r_t'$, $b_t \ge b_t'$ and thus $d_t \ge d_t',h_t \ge h_t'$ and $m_t \ge m_t'$ for $t\le \tau$. To simplify matters we note that $\tau = \tau_{\ell} \wedge \tau_r$, where
$$\tau_\ell = \inf\{t:d_t=1\} \quad \hbox{and} \quad \tau_r = \inf\{t:h_t=1\}.$$ 
\indent In the estimates that follow, $c,C$ and~$D$ are positive constants and $c, 1/C$ may get smaller from step to step.
 By monotonicity of the contact process, for~$i = 1,2$, the set of occupied sites~$\{x : \zeta_t^i (x) = 1 \}$ is dominated by the pure birth process in which particles do not die and give birth onto neighboring sites at rate~$\lambda_{i0}$, so the advance
 of type~$i$ into uncharted territory grows like at most~$\poisson (\lambda_{i0} \,t)$.
 In particular,
 $$ P (b_t' - r_t' < n) \leq P (d_0 - \poisson ((\lambda_{10} + \lambda_{20}) \,t) < n) \quad \hbox{for all} \quad n > 0, $$
 and applying a standard large deviations estimate, we get
\begin{equation}
\label{eq1}
  P (h_s' < 2 \ \hbox{for some} \ s \leq h_0 / (2 \,(\lambda_{10} + \lambda_{20}))) \leq C \,e^{-ch_0}.
\end{equation}
 Also, for each~$t > 0$,
\begin{equation}
\label{eq2}
  P (b_s' < b_0 - 2 \lambda_{10} \,t \ \hbox{for some} \ s \leq t) \leq e^{-ct}.
\end{equation}
 To control~$r_s'$, we use a known estimate at integer times, then a Poisson estimate at in-between times.
 From~\cite{griffeath_1981} and the assumption~$\lambda_{20} < \lambda_c$, for $t>0$,
\begin{equation}
\label{eq3}
  P (r_t' > r_0 - e^{ct}) \leq C \,e^{-ct}.
\end{equation}
 In addition, since the displacement in one unit of time is dominated by a Poisson random variable with parameter~$\lambda_{20}$, for any integer $k \geq 0$, we have
\begin{equation}
\label{eq4}
  P (r_s' - r_n' > k \ \hbox{for some} \ s \in [n, n + 1]) \leq C \,e^{-ck}.
\end{equation}
 Combining~\eqref{eq3} with $t=n$ and~\eqref{eq4}, we deduce that
\begin{equation}
\label{eq5}
  P (r_s' > r_0 - e^{cn} + n \ \hbox{for some} \ s \in [n, n + 1]) \leq C \,e^{-cn}
\end{equation} 
 Then, combining with~\eqref{eq2} evaluated at~$t = n + 1$, for each integer $n \geq 1$,
 \beq\label{eq:brbnd1}
  P (h_s' < h_0 + e^{cn} - (1 + 2 \,\lambda_{10})(n + 1) \ \hbox{for some} \ s \in [n, n + 1]) \leq C \,e^{-cn}.
  \eeq
 To deduce the first estimate, we distinguish two cases, where $D>0$ is a large enough constant. \vspace*{5pt} \\
\textbf{Case 1:~$m_0 > D$.} For one side of the argument, $h_0>D$ suffices -- an analogous argument applies to the other side assuming $d_0>D$. The bound on~$h_s'$ in \eqref{eq:brbnd1} is at least two for all~$n$.
Recalling that $h_t \ge h_t'$ for $t\le\tau$, then combining~\eqref{eq1} with~\eqref{eq:brbnd1} summed over~$n \geq \lfloor h_0 / (2 \,(\lambda_{10} + \lambda_{20})) \rfloor$,
\begin{equation}
\label{eq6}
  \begin{array}{rcl}
    P(\tau_r < \infty) &=& P(\inf_{t \ge 0}h_t \le 1) \le P (\inf_{t \ge 0}h_t' \le 1) \vspace*{4pt} \\
                     & \leq & Ce^{-ch_0} + \sum_{n \geq ch_0} \,C \,e^{-cn} \leq C \,e^{-ch_0} \quad \hbox{when} \quad h_0 > D, \end{array}
\end{equation}
where $c,C$ do not depend on $D$. By reflection invariance, the same holds for $\tau_\ell$. If $D$ is large enough that $Ce^{-cD}<1/2$ we find that $P(\tau = \infty) \ge 1-2Ce^{-cD} = \ep$ for some $\ep>0$.\\

\noindent\textbf{Case 2:~$m_0 \le D$.} Given $S$, let $E$ denote the event where, in one unit of time, there is a death at every site in $[\inf S-D,\inf S-1]\cup [\sup S+1,\sup S+D]$ and no birth onto any vertex in the same set. Since the birth rate onto any vertex is at most $2\lbd_{10}$ and the death rate at any site is $1$,
$$P(E) \ge (1-e^{-1})^{4D}(e^{-2\lbd_{10}})^{4D} = \delta>0,$$
with $\delta$ depending on $D$ but not on $S$. Note that on $E$, $\tau>1$ and $m_1>D$.
Using the Markov property and the previous result,
 $$P(\tau=\infty) \ge P(\tau>1 \ \hbox{and} \ m_1 > D)P(\tau=\infty \mid \tau>1 \ \hbox{and} \ m_1>D) \ge \delta \ep,$$
which gives the first statement with $p=\delta\ep>0$. \vspace*{5pt} \\
 Now, in~\eqref{eq5} above, for~$n \geq n_0$ for some~$n_0$, absorb~$n$ into~$-e^{cn}$ by decreasing~$c$, then increase~$C$ to account for~$n < n_0$.
 Then, for any~$t > 0$, summing~\eqref{eq5} over~$n \geq \lfloor t \rfloor$,
$$
  P (r_s' > r_0' - e^{-cs} \ \hbox{for some} \ s > t) \leq C \,e^{-ct}.
$$
 On the event~$\{\tau = \infty \}$, we have~$r_t \le r_t'$ for all~$t \geq 0$, and the second statement follows.
 Using the two bounds~\eqref{eq2} and~\eqref{eq3} above and noting $m_0\ge 2$,
 $$ P (h_t' < 2 + e^{ct} - 2 \,\lambda_{10} \,t) \leq C \,e^{-ct} $$
 and for~$t$ large enough, $2 + e^{ct} - 2 \, \lambda_{10} \, t \ge t$.
 Since $h_t \ge h_t'$ for $t\le \tau$, it follows that
 $$ P( h_t < t, \ t<\tau) \le C e^{-ct},$$
 and an analogous estimate applies to $d_t$. For~$t$ large enough, using the above and~\eqref{eq6},
 $$ \begin{array}{rcl}
      P (t < \tau < \infty) & = & P(m_t < t, \ t<\tau<\infty) + P(m_t \ge t, \ t<\tau<\infty) \\
      &\le& P(m_t < t, \ t<\tau) + P(t<\tau<\infty \mid m_t \ge t)\\
      &\le& P(h_t < t, \ t<\tau) + P(d_t<t, \ t<\tau) \\
      &+& P(t<\tau_r<\infty \mid m_t \ge t) + P(t<\tau_\ell<\infty \mid m_t \ge t)\\
	  &\le& 4C e^{-ct}. \end{array} $$
 This completes the proof.
\end{proof} \\

\begin{lemma}\label{lemma2}
Let $T=\inf\{t:m_t \ge 2\}$. There are positive constants~$c,c$ so that if $S\subset \Z$ is a finite interval and $S\cap S_1(\xi_0)=\varnothing$ then
$$P( T >t) \le Ce^{-ct}$$
\end{lemma}

\begin{proof}
Let $s_0=0$ and $s_1,s_2,\dots$ denote the times when either death occurs at $\ell_t-1$ or $r_t+1$, or infection occurs across either the edge $(\ell_t,\ell_t-1)$ or $(r_t,r_t+1)$. Then $T \le s_{2K}$ where
$$K = \inf\{k: \hbox{death occurs at} \quad (\ell_t-1,s_{2k-1}) \quad \hbox{and} \quad (r_t+1,s_{2k})\}.$$
Clearly, $K \preceq \geometric((1/2\lbd_{21})^2)$ and $\{s_{k+1}-s_k \colon k\ge 0\} \preceq \{\sigma_k \colon k\ge 0\}$, an i.i.d. sequence of $\exp(2)$ random variables. A routine estimate gives the result.
\end{proof}\\

Next we show the position of the rightmost pathogen in any $A_t$ goes to~$- \infty$ exponentially fast.
 This is the analog of the second estimate in~Lemma~\ref{lemma1} but dropping the condition $\tau=\infty$.
 This result is then used in the subsequent lemma to show that the probability that the rightmost pathogen moves~$n$ steps to the right of its initial position decays exponentially with~$n$.
 
\begin{proposition} --
\label{prop1}
 There are positive constants~$c,C$ so that if $S\subset \Z$ is a finite interval and $S\cap S_1(\xi_0)=\varnothing$ then
 $$ P(r_t > r_0 - e^{ct}) \leq C \,e^{-ct}. $$
\end{proposition}
\begin{proof}
Define recursively the two sequences of stopping times~$(\tau_i)_{i\ge 0}$ and~$(T_i)_{i\ge 0}$ by ~$\tau_0=0$, $T_0 = \inf\{t \ge 0:m_t=2\}$ and recursively for $i\ge 1$,
$$\bay{rcl}
\tau_i &=& \inf \,\{t > T_{i-1} : m_t = 1 \}, \\
T_i &=& \inf \,\{t > \tau_i : m_t = 2 \},
\eay$$
with the convention $\inf \varnothing = \infty$. Let $N = \inf\{i:\tau_i=\infty\}$. Recursively at each time $\tau_i$, applying the strong Markov property and noting \eqref{eq:semidesc}, then applying the first result of Lemma~\ref{lemma1} with $S = [\ell_{\tau_i},r_{\tau_i}]$ we find that
 $$ N \preceq - 1 + \geometric (p) $$
 where~$\preceq$ means stochastically smaller. Doing the same, but applying the third result of Lemma~\ref{lemma1},
 $$\{(\tau_i - T_{i-1}) \,\ind(N \geq i) \colon i>0\} \preceq \{\sigma_i \,\ind (N \geq i ) \colon i>0\}$$
 where~$\sigma_1,\sigma_2,\ldots$ is a sequence of independent, identically distributed random variables independent of the random variable~$N$ and such that
 $$ P (\sigma_i > t) = \min \,(C e^{-ct}, 1) \quad \hbox{for all} \quad t > 0. $$
Using Lemma \ref{lemma2}, the same holds for $\{(T_i-T_{i-1}) \ind (N \ge i) \colon i>0\}$. Letting~$n = \lceil t /4 E [\sigma_i]\rceil$ and applying a large deviations bound,

 \begin{equation}\label{equation-0}
\begin{array}{rcl}
      P (\tau_N > t) &\le& P((\tau_N-T_{N-1}) + \cdots + (\tau_1 - T_0) > t/2) \\
      &&+ P((T_{N-1}-\tau_{N-1}) + \cdots + (T_0 - \tau_0) > t/2)\\
      & \leq & 2P (\sigma_1 + \cdots + \sigma_N > t/2) \vspace*{4pt} \\
                       & \leq & 2P (N > n) + 2P (\sigma_1 + \cdots + \sigma_n > 2 n \,E [\sigma_i]) \leq 2C \,e^{-cn} \leq 2C \,e^{-ct}. \end{array}  
 \end{equation}

 Recall that~$\lambda_{20} \leq \lambda_{21}$ by assumption.
 Comparing the set of sites in state~2 to a pure birth process with no deaths and with birth to adjacent sites at rate~$\lambda_{21}$, a large deviations estimate gives~$c, C > 0$ so that
\begin{equation}
\label{equation-1}
  P (r_t > r_0 + 2 \,\lambda_{21} \,t) \leq C \,e^{-ct} \quad \hbox{for all} \quad t > 0.
\end{equation}
 For any~$c > 0$, there is~$t_0$ so that~$\lambda_{21} \,t < e^{ct} - e^{ct/2}$ for all~$t > t_0$, in which case
\begin{equation}
\label{equation-2}
  \begin{array}{rcl}
    P (r_t > r_0 - e^{ct/2}) & \leq & P (r_{t/2} > r_0 + \lambda_{21} \,t) \vspace*{4pt} \\
                               &      & \hspace*{10pt} + \ P (r_t > r_0 - e^{ct/2} \ \hbox{and} \ r_{t/2} \leq r_0 + \lambda_{21} \,t) \vspace*{4pt} \\
                               & \leq & P (r_{t/2} > r_0 + \lambda_{21} \,t) + P (r_t > r_{t/2} - \lambda_{21} \,t - e^{ct/2}) \vspace*{4pt} \\
                               & \leq & P (r_{t/2} > r_0 + \lambda_{21} \,t) + P (r_t > r_{t/2} - e^{ct}) \end{array}
\end{equation}
 for all~$t > t_0$. On the other hand,
\begin{equation}
\label{equation-3}
   P (r_t > r_{t/2} - e^{ct}) \le P (\tau_N > t/2) + P(r_t > r_{t/2}- e^{ct}, \ \tau_N \le t/2)
\end{equation}
Letting $\tau(t) = \inf\{s>t:m_s=1\}$, the event $\tau_N \le t/2$ is equivalent to $\tau(t/2)=\infty$. Applying the second result of Lemma \ref{lemma1} with $S=[\ell_{t/2},r_{t/2}]$ we find that
$$P(r_t > r_{t/2} - e^{ct} \ \hbox{and} \ \tau(t/2)=\infty) \le Ce^{-ct}.$$
 Combining with equations~\eqref{equation-0}--\eqref{equation-3} gives the desired estimate when~$t > t_0$.
 If~$t\leq t_0$ then, after increasing~$C$ if necessary, the estimate holds for all values of~$t$.
\end{proof}

\begin{proposition} --
\label{prop3}
 There is~$C > 0$ so that if $S\subset \Z$ is a finite interval and $S \cap S_1(\xi_0) = \varnothing$ then
 $$ P (r_t > r_0 + n \ \hbox{for some} \ t \geq 0) \leq C \,e^{-cn}.$$
\end{proposition}
\begin{proof}
 Comparing to a pure birth process as above,
 $$ P (r_s > r_0 + n \ \hbox{for some} \ s \leq m_0) \leq C \,e^{-cn} \quad \hbox{for} \quad m_0 = \lfloor n / 2 \lambda_{21} \rfloor. $$
If $n \ge n_0 = \sup_{m\ge 0}- e^{cm} + m$ then using Proposition~\ref{prop1} and large deviations for the Poisson distribution with parameter~$\lambda_{21}$,
 $$ \begin{array}{l}
      P (r_s > r_0 + n \ \hbox{for some} \ s \in [m, m + 1]) \vspace*{4pt} \\ \hspace*{20pt} \leq \
      P (r_m > r_0 - e^{-cm}) + P (r_s > r_m + m \ \hbox{for some} \ s \in [m, m + 1]) \leq C \,e^{-cm}. \end{array} $$
 Summing over~$m \geq m_0$ gives the desired estimate for $n\ge n_0$ -- for $n<n_0$ increase $C$ if necessary.
\end{proof}
 

\begin{figure}[t]
\centering
\scalebox{0.60}{\input{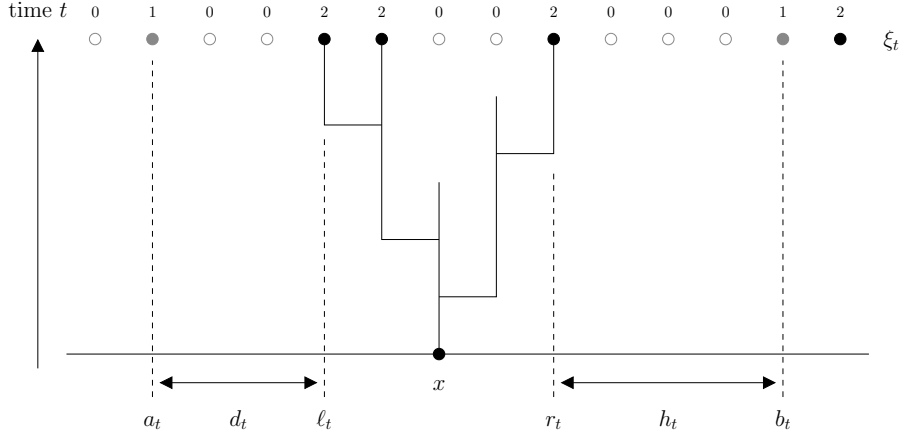}}
\caption{\upshape{Picture related to the proof of Proposition~\ref{prop5}}}
\label{fig:descendant}
\end{figure}

We can now show the set of descendants is exponentially bounded in both space and time.

\begin{proposition} --
\label{prop5}
There are~$c, C > 0$ so that if $x \in \Z$ and $\xi_0(x)=2$,
 $$ P(A_t(x) \neq \varnothing) \leq C \,e^{-ct} \quad \hbox{and} \quad P(A_t(x) \notin [x - n, x + n] \ \hbox{for some} \ t \geq 0) \leq C \,e^{-cn} $$
\end{proposition}

\begin{proof}
Defining $a_t,\ell_t$ etc. with $S=\{x\}$, $\ell_0=r_0=x$ and $A_t(x)\ne\varnothing$ is equivalent to $\ell_t \le r_t$. Using Proposition \ref{prop1}, reflection invariance and a union bound, 
$$\bay{rcl}
P(\ell_t > r_t) &\ge& P(\ell_t \ge \ell_0 + e^{ct} \ \hbox{and} \ r_t \le r_0-e^{ct}) \\
&\ge & 1 - (P(\ell_t< \ell_0+e^{ct}) + P(r_t > r_0-e^{ct}) \ge 1-2Ce^{-ct},\eay$$
and the first result follows by taking the complement. The second result follows in the same way, except using Proposition \ref{prop3}.
\end{proof} \\ \\

 Next, we use Proposition~\ref{prop5} to prove~\eqref{eq:1D-part1}, which says that in an exponentially growing neighborhood of any site, eventually there are
 no sites in state~2.
 We also record an exponential estimate.
Note the change in the definition of $\ell_t^x,r_t^x$.
\begin{proposition} --
\label{prop7}
 For a site~$x \in \Z$, let
 $$ \ell_t^x = \sup \,\{y \leq x : \xi_t(y) = 2 \} \quad \hbox{and} \quad r_t^x = \inf \,\{y \geq x : \xi_t (y) = 2 \}. $$
 Then, there exist~$c, C > 0$ such that, for any~$\xi_0$ and~$t_0$,
 $$ P(\ell_t^x > x - e^{ct} \ \hbox{or} \ r_t^x < x + e^{ct} \ \hbox{for some} \ t > t_0) \leq C \,e^{-ct_0}. $$
 Also, there exists~$c > 0$ so that, for any~$\xi_0$ and any site~$x$,
 $$ P(\sup \,\{t : \xi_t (y) = 2 \ \hbox{for some} \ y \ \hbox{such that} \ |y - x| \leq e^{ct} \} < \infty) = 1. $$
\end{proposition}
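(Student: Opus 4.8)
The plan is to deduce the second, almost-sure statement from the first quantitative one, so I focus on the estimate for $\ell_t^x$ and $r_t^x$; by the left-right symmetry of the dynamics it suffices to bound $P_\xi(\ell_t^x > x - e^{ct} \ \hbox{for some} \ t > t_0)$, the bound for $r_t^x$ being identical, and the two then combined by a union bound. The event $\{\ell_t^x > x - e^{ct}\}$ says exactly that at time $t$ there is a site in state~$2$ inside the window $(x - e^{ct}, x]$. The key observation is that every such $2$ belongs to the lineage $A_t^z$ of some initial site $z$ with $\xi(z) = 2$: tracing a site in state~$2$ backwards through the births and infections that produced it leads back to time~$0$ at a site that was already infected. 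Thus I would write the event as a union, over all $z$ with $\xi(z) = 2$, of the events $\{A_t^z \cap (x - e^{ct}, x] \neq \varnothing \ \hbox{for some} \ t > t_0\}$, and bound its probability by summing the contributions of the individual lineages, each of which is controlled by Proposition~\ref{prop5}.

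To carry out the sum I would discretize time into the unit intervals $[n, n+1]$ with $n \geq t_0$, controlling the within-interval displacement of each lineage by a Poisson large-deviation estimate exactly as in~\eqref{eq4}--\eqref{eq5} (in the limiting case $\lambda_{21} = \infty$ this step must also absorb the at most $\poisson(\lambda_{10})$ invasions per unit time, as in the proof of Proposition~\ref{prop3}). For fixed $n$ I split the initial $2$s according to their distance $m = |z - x|$ from $x$. For the \emph{near} sites, those with $m \lesssim e^{cn}$, of which there are only $O(e^{cn})$, I use the survival bound $P_\xi(A_n^z \neq \varnothing) \leq C e^{-\gamma n}$ from the first half of Proposition~\ref{prop5}, where $\gamma > 0$ denotes the exponential rate furnished by that proposition. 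For the \emph{far} sites, with $m > e^{cn}$, the lineage must travel a distance at least $m - e^{cn}$ to reach the window, so the second half of Proposition~\ref{prop5} contributes $C e^{-\gamma(m - e^{cn})}$; intersecting with the survival event lets me replace this by $\min(C e^{-\gamma n}, C e^{-\gamma(m - e^{cn})})$, whose sum over $m$ is at most $C n \, e^{-\gamma n}$.

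Adding the two contributions gives a per-interval bound of order $e^{(c - \gamma)n} + n \, e^{-\gamma n}$, and summing over $n \geq t_0$ yields $C e^{-c' t_0}$ for a suitable $c' > 0$. The crucial point, and the main obstacle, is the near sum: there are of order $e^{cn}$ initial $2$s sitting in the window region, each surviving to time $n$ with probability of order $e^{-\gamma n}$, so a naive union bound gives only an $O(1)$ per-interval estimate and the sum over $n$ diverges. The bound is rescued only by choosing the window growth rate $c$ strictly smaller than the decay rate $\gamma$ of Proposition~\ref{prop5}, so that $e^{cn} \cdot e^{-\gamma n} \to 0$; this is precisely why the constant $c$ in the statement must be taken small, and it is the one delicate calibration in the argument. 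Finally, the almost-sure statement follows at once, since the events $E_{t_0} := \{\xi_t(y) = 2 \ \hbox{for some} \ y \ \hbox{with} \ |y - x| \leq e^{ct} \ \hbox{and some} \ t > t_0\}$ decrease in $t_0$ and have probability at most $C e^{-c t_0} \to 0$, so their intersection, namely the event that $2$s return to the growing window at arbitrarily large times, is null.
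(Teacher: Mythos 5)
Your proposal is correct and follows essentially the same route as the paper's proof: both reduce the event to the lineages $A_t^z$ of the initially infected sites, split those sites into near ones (union bound against the survival estimate of Proposition~\ref{prop5}, using that extinction of a lineage is absorbing) and far ones (the travel estimate of Proposition~\ref{prop5}, summed geometrically in the distance), calibrate the window growth rate strictly below the decay rate furnished by Proposition~\ref{prop5}, and sum over unit time intervals before passing to the almost-sure statement. The only differences are cosmetic: the paper hard-codes the exponents ($e^{ct/4}$ window, $e^{cn/2}$ near radius, $e^{-cn}$ decay) where you keep the calibration $c < \gamma$ symbolic, your within-interval Poisson displacement control is redundant since the travel bound of Proposition~\ref{prop5} is already uniform over all times, and you conclude via a decreasing intersection of events where the paper invokes Borel--Cantelli.
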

\begin{proof}
 Throughout this proof, $y$ refers to a site which is initially in state~2.
 Let~$c, C$ be two constants as in Proposition~\ref{prop5}, so that
 $$ P(A_n(y) \neq \varnothing) \leq C \,e^{-cn} \quad \hbox{for all} \quad y, \xi \ \hbox{and} \ n. $$
 Using a union bound over~$y \in [x - e^{cn/2}, x + e^{cn/2}]$ and that~$A_t(y) = \varnothing$ is an absorbing property,
 $$ \begin{array}{l}
      P (A_t(y) \neq \varnothing \ \hbox{for some} \ y \ \hbox{such that} \ \xi_0 (y) = 2 \vspace*{4pt} \\ \hspace*{80pt}
                     \hbox{and} \ |y - x| \leq e^{cn/2} \ \hbox{and some} \ t \geq n) \leq C \,e^{-cn/2}. \end{array} $$
 Let~$n_0$ be such that~$n \geq n_0$ implies~$e^{cn/2} - e^{c (n + 1)/4} > n$.
 If~$|y - x| = \lceil e^{cn/2} \rceil + m$ for integer~$m \geq 0$ and all $n \geq n_0$, then we have
 $$ P (|\sup A_t(y)  - x| \leq e^{c (n + 1)/4} \ \hbox{for some} \ t \geq 0) \leq C \,e^{-c (n + m)} $$
 Taking~$C$ larger if necessary makes the previous inequality true also for all~$n < n_0$.
 Then, taking a union bound and increasing~$C$ at the last step gives
 $$ \begin{array}{l}
      P (|\sup_t |\sup A_t(y) - x| \leq e^{c (n + 1)/4} \ \hbox{for some} \ y \vspace*{4pt} \\ \hspace*{50pt}
                \hbox{such that} \ |y - x| > e^{cn/2} \ \hbox{and} \ \xi_0 (y) = 2) \leq C (1 - e^{-c}) \,e^{-cn} = C \,e^{-cn}. \end{array} $$
 Combining the estimates gives
 $$ P (\xi_t (y) = 2 \ \hbox{for some} \ y \in [x - e^{ct/4}, x + e^{ct/4}] \ \hbox{and} \ t \in [n, n + 1]) \leq C \,e^{-cn/2}. $$
 Given~$t_0$, summing over~$n \geq \lfloor t_0 \rfloor$ gives the first statement. 
 Summing over all~$n$ and using the Borel-Cantelli lemma finishes the proof.
\end{proof} \\ \\

 In what follows, a \emph{mark} refers to a Poisson point in the graphical representation.
 Marks are named by their effect on the target site, so for example, a~$0 \to 1$ mark is an edge mark at some time~$t$ along a directed edge~$(x, y)$ such that, if~$\xi_{t^-} (x) = 1$ and~$\xi_{t^-} (y) = 0$ then~$\xi_t (y) = 1$.
 Also, a death mark refers to a~$\star \to 0$ event, while a birth mark is a~$0 \to \star$ event, where~$\star \neq 0$.
 Note that since the graphical representation consists of at most a countably infinite number of Poisson point processes, with probability one, no two marks occur at the same time.\\
 
 Next we use Proposition \ref{prop7} to produce an interval that grows linearly in time and is devoid of pathogens.

\begin{lemma} --
\label{lemma7}
 For any~$\mu > 0$, there are~$c, C > 0$ so that
 $$ P (\xi_t (x) = 2 \ \hbox{for some} \ |x| \leq n / 2 + \mu t) \leq C \,e^{-cn} $$
 for all~$\xi_0$ such that~$\xi_0(x) \neq 2$ for all~$|x| \leq n$.
\end{lemma}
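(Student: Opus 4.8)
The plan is to trace every type-2 site present at time~$t$ back to a type-2 site present at time~$0$, and then to bound, via Proposition~\ref{prop5}, the probability that any such initial pathogen has a descendant reaching the inner region. Since the only transitions that create a~$2$, namely~$0 \to 2$ and~$1 \to 2$, both require a neighboring~$2$, every site in state~$2$ at time~$t$ lies in~$A_t^y$ for some~$y$ with~$\xi_0 (y) = 2$; by hypothesis all such~$y$ satisfy~$|y| > n$. Writing~$R := n/2 + \mu t$ for the inner radius and using the union bound together with the reflection symmetry of the dynamics, it therefore suffices to estimate
$$ \sum_{y > n} P_{\xi} (A_t^y \cap [-R, R] \neq \varnothing), $$
the sum ranging over integers~$y > n$ with~$\xi_0 (y) = 2$, and to show it is at most~$C \,e^{-cn}$ for suitable~$c, C > 0$.

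For each such~$y$ I would bound the summand by combining the two estimates of Proposition~\ref{prop5}. On the one hand, the event in question forces~$A_t^y \neq \varnothing$, so it has probability at most~$C \,e^{-ct}$. On the other hand, when~$y > R$ a descendant can only enter~$[-R, R]$ if it is displaced at least~$y - R$ to the left of its ancestor, so the spatial estimate bounds the event by~$C \,e^{-c (y - R)}$. Hence
$$ P_{\xi} (A_t^y \cap [-R, R] \neq \varnothing) \leq \min \,(C \,e^{-ct}, \,C \,e^{-c (y - R)}) \quad \hbox{when} \quad y > R, $$
while for~$n < y \leq R$ (possible only if~$R > n$) I retain only the temporal bound~$C \,e^{-ct}$.

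The remaining work is to sum these bounds, and here the natural dichotomy is whether~$t$ is small or large compared with~$n$. If~$t \leq n/(4 \mu)$ then~$R \leq 3n/4$, so every relevant~$y$ satisfies~$y > n > R$ with a margin~$n - R \geq n/4$, and the spatial bound alone yields a convergent geometric series,
$$ \sum_{y > n} C \,e^{-c (y - R)} \leq C' \,e^{-c (n - R)} \leq C' \,e^{-cn/4}. $$
If instead~$t > n/(4 \mu)$, I would use the~$\min$ bound: the terms with~$n < y \leq R + t$ each contribute at most~$C \,e^{-ct}$ and number~$O(t)$, while the terms with~$y > R + t$ sum geometrically to~$O(e^{-ct})$; after absorbing the polynomial prefactor into the exponential by shrinking~$c$, the whole sum is at most~$C'' \,e^{-c' t} \leq C'' \,e^{-c' n/(4 \mu)}$. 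Either way the total is at most~$C \,e^{-cn}$, and doubling to account for the symmetric contribution of the~$y < -n$ completes the argument.

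The crux is that there are infinitely many candidate ancestors~$y$ and that neither estimate of Proposition~\ref{prop5} is by itself summable over~$y$ with a bound decaying in~$n$: the spatial bound degrades as~$R \uparrow n$, that is, as~$t \uparrow n/(2\mu)$, whereas the temporal bound is uniform in~$y$ and controls only survival. Reconciling the two through the~$\min$ bound and the case split at~$t \asymp n/(4\mu)$, so that a spatial margin of order~$n$ is available when~$t$ is small and a temporal decay of order~$n$ is available when~$t$ is large, is the step that requires the most care.
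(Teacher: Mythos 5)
Your skeleton --- trace every pathogen back to an ancestor $y$ with $|y| > n$, bound each ancestor's contribution by the minimum of the survival and displacement estimates of Proposition~\ref{prop5}, and sum over ancestors --- is sound and is essentially the same mechanism the paper uses. But there is a genuine gap: throughout your argument the time $t$ is \emph{fixed}, so what you prove is a bound, uniform in $t$, on the probability that a $2$ sits inside $[-R, R]$, $R = n/2 + \mu t$, \emph{at time} $t$. The lemma, as the paper proves it and as it is used in the proof of Lemma~\ref{lemma8}, is a statement about the event that a $2$ \emph{ever} enters the space-time cone $\{(x, t) : t \geq 0, \ |x| \leq n/2 + \mu t\}$; indeed the conclusion drawn from it there is
$$ P \,(\min \,(|a_t^2|, |b_t^2|) \leq 2 \lambda_{10} \,t + n/2 \ \hbox{for some} \ t > 0) \leq C \,e^{-cn}. $$
A fixed-time bound, even one uniform in $t$, does not deliver this: you cannot take a union bound over the uncountably many $t \in [0, \infty)$, and a pathogen can enter and leave the cone strictly between any two times at which you test the configuration. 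Your write-up never confronts this quantifier, so the statement you establish is strictly weaker than the one the paper needs.

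The paper sidesteps the issue by tying \emph{both} cutoffs to the ancestor's position rather than to $t$ or $n$: for the ancestor at $y = n + m$ it considers the single bad event $\{A_{\ep y}^y \neq \varnothing\} \cup \{\inf_t A_t^y < 2y/3\}$, of probability at most $C \,e^{-c \ep y}$ by Proposition~\ref{prop5}; on its complement the \emph{entire} space-time trace of the descendants of $(y, 0)$ lies in the rectangle $[2y/3, \infty) \times [0, \ep y]$, which is disjoint from the whole cone as soon as $n/2 + \mu \ep y < 2y/3$, i.e., for $\ep < 1/(6\mu)$ since $y \geq n$. Summing over $m$ then controls all ancestors and all times in one stroke. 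Your argument is repairable with the tools you already invoke: slice time into unit intervals $[m, m+1]$, replace $R$ by $R_{m+1} = n/2 + \mu (m + 1)$, observe that survival of the line of $y$ at some $s \geq m$ forces $A_m^y \neq \varnothing$ (extinction is absorbing), and recall that the displacement estimate of Proposition~\ref{prop5} is already a ``for some $t \geq 0$'' statement, so it covers the whole interval; your two-case analysis then goes through with $t$ replaced by $m$, and the resulting bounds ($O(n)$ slices contributing $e^{-cn/4}$ each, plus a geometrically decaying tail in $m$) are summable to $C \,e^{-cn}$. As written, however, the key step of passing from a fixed time to all times is missing.
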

\begin{figure}[t]
\centering
\scalebox{0.60}{\input{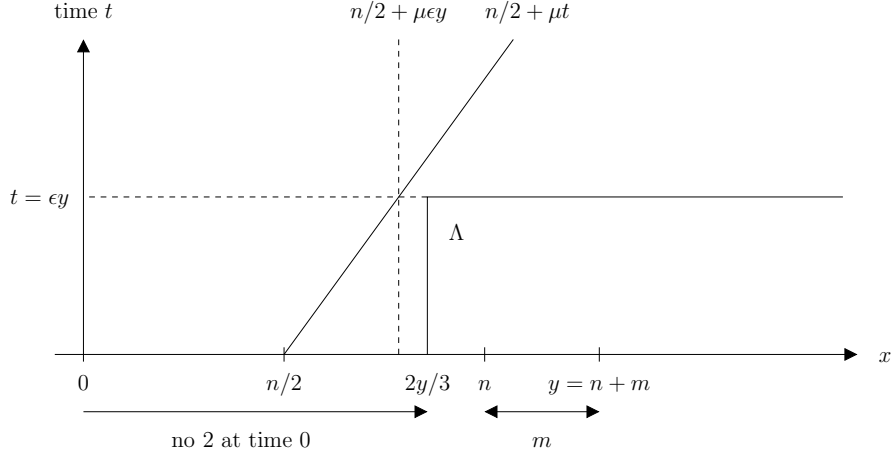}}
\caption{\upshape{Picture related to the proof of Lemma \ref{lemma7}}}
\label{fig:rectangle}
\end{figure}
\begin{proof}
 Let~$y = n + m$ with~$m > 0$; a similar argument applies to~$-y$.
 Using Proposition~\ref{prop5} gives the existence of constants~$c, C > 0$ so that, for all $\ep > 0$,
 $$ \begin{array}{l} P (A_{\ep y}^y \neq \varnothing  \ \hbox{or} \ \inf_t A_t^y < 2y/3) \leq C \,e^{-c \ep y}. \end{array} $$
 On the complement of the above event, the descendants of~$(y, 0)$ are contained in
 $$ \Lambda := \{(x, t) : x \geq 2y/3 \ \hbox{and} \ 0 \leq t \leq \ep y \}. $$
 A quick sketch (see Figure~\ref{fig:rectangle}) shows that the rectangle~$\Lambda$ is disjoint from the set
 $$ \{(x, t) : t \geq 0 \ \hbox{and} \ |x| \leq n / 2 + \mu t \} $$
 provided the top left corner of~$\Lambda$ lies to the right of the line~$x = n / 2 + \mu t$, which is the condition
 $$ n / 2 + \mu \ep y < 2y / 3. $$
 Since~$n \leq y$, this condition is satisfied if~$\ep < 1/ 6 \mu$.
 Summing over~$m > 0$ for both~$y = n + m$ and~$y = - n - m$ then gives the desired result.
\end{proof} \\ \\
 A related notion to the descendants is the \emph{cluster}, that we need only define for type~1, as follows.
 Suppose~$\xi_s (x) = \xi_t (y) = 1$ for some~$x, y$ and~$s \leq t$.
 Then, we say that~$(y, t)$ belongs to the cluster of~$(x, s)$ if there are times and sites
 $$ s = t_0 < t_1 < \cdots < t_{k - 1} \leq t_k = t \quad \hbox{and} \quad x = x_1, x_2, \ldots, x_k = y $$
 such that the following two conditions hold:
\begin{itemize}
 \item For~$j = 1, 2, \ldots, k$, we have~$\xi_r (x_j) = 1$ for all times~$r \in [t_{j - 1}, t_j]$. \vspace*{4pt}
 \item For~$j = 1, 2, \ldots, k - 1$, there is a~$0 \to 1$ birth mark along the edge~$(x_j, x_{j + 1})$ at time~$t_j$.
\end{itemize}
 In contrast to the definition of descendants, it is permitted to have~$\xi_{t_j^-} (x_{j + 1}) = 1$. \\
\indent If~$\xi_s (x) = 1$ then, for~$t \geq s$, let~$B_t (x, s)$ denote the cluster of~$(x, s)$ at time~$t$, that is,
 $$ B_t (x, s) := \{y \in \Z : (y, t) \ \hbox{is in the cluster of} \ (x, s) \}, $$
 and denote it~$B_t (x)$ for~$s = 0$.
 Again, since interactions are nearest-neighbor,
 $$ \xi_t(y) \neq 2 \quad \hbox{for all} \quad y \in [\inf B_t (x, s), \sup B_t (x, s)] \quad \hbox{and} \quad t \geq s. $$
 As a warm-up to~\eqref{eq:1D-part2}, we prove the following.
\begin{lemma} --
\label{lemma8}
 Let~$\ell_t = \inf B_t (0)$ and~$r_t = \sup B_t (0)$, and let
 $$ \tau = \inf \,\{t > 0 : \xi_t (\ell_t - 1) = 2 \ \hbox{or} \ \xi_t (r_t + 1) = 2 \} .$$
 Then, there are~$p, c, C > 0$ such that
 $$ P(\tau = \infty) \geq p \quad \hbox{and} \quad P (t < \tau < \infty) \leq C \,e^{-ct} $$
 uniformly over~$\xi_0$ such that~$\xi_0(0) = 1$.
\end{lemma}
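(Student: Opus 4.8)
The plan is to mirror the segregated analysis of Lemmas~\ref{lemma1} and~\ref{lemma2}, together with the adaptation carried out in Proposition~\ref{prop5}, but now with the two types interchanged: the object we follow is the type-1 cluster $B_t(0)$, so the \emph{supercritical} process $\lambda_{10} > \lambda_c$ plays the leading role while the \emph{subcritical} process $\lambda_{20} < \lambda_c$ is the nuisance that must be kept away from the cluster boundary.

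First I would set up a coupling through two independent substructures, exactly as in the proof of Proposition~\ref{prop5}: let $\zeta^1$ be the copy started from the single type-1 at the origin, $\zeta^1_0(y) = \ind\{y = 0\}$, and let $\zeta^2$ be the copy started from the complement, $\zeta^2_0(y) = \xi(y)\,\ind\{y \neq 0\}$. Since $\zeta^1$ carries no type-2 it is simply the contact process with parameter $\lambda_{10}$, and defining $\xi_t$ from both substructures up to $\tau$ and from the first one afterwards produces a coupling in which $\tau$ can be computed from $\zeta^1$ and $\zeta^2$, with the cluster edges $\ell_t, r_t$ governed by the edges of $\zeta^1$ for $t \le \tau$ and the offending type-2's carried by $\zeta^2$. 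The comparison property of Proposition~\ref{prop5} (valid because $\lambda_{20} \le \lambda_{10}$) controls the interaction between the cluster and the type-1 sites lying outside it. As in~\eqref{eq0}, the event $\{\tau < \infty\}$ then becomes a collision, on the left or the right, between an edge of $\zeta^1$ and the nearest type-2 of $\zeta^2$, and can be estimated from these two processes.

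Second, I would establish the one-sided estimates~\eqref{eq1}--\eqref{eq7} on each side of the cluster. On the right, the edge $r_t$ advances no faster than a pure-birth process, so $r_t \le r_0 + 2\lambda_{10}\,t$ up to an exponentially small error, while the leftmost type-2 lying to the right recedes to the right exponentially fast by the reflected form of the Griffeath estimate~\eqref{eq3}--\eqref{eq5}, using $\lambda_{20} < \lambda_c$; the symmetric statements hold on the left. Hence the gap between the cluster and the nearest type-2 has positive drift on each side, and the two-case (large gap / small gap) argument of Lemma~\ref{lemma1} yields $P_\xi(\tau = \infty) \ge p > 0$ uniformly in $\xi$, the small-gap case being handled by running one unit of time and using that the subcritical type-2 recedes past a fixed distance with probability bounded below. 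The bound $P_\xi(t < \tau < \infty) \le C\,e^{-ct}$ then follows from the recursive-stopping-time scheme of Lemma~\ref{lemma2} applied to the successive collision attempts, combined with a union bound over the two sides.

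The main obstacle is the case $\lambda_{21} = \infty$ (and large finite $\lambda_{21}$), where a type-2 may approach the cluster not only through its own births but by instantaneously \emph{invading} any string of type-1 sites separating it from $B_t(0)$, so that the nearest type-2 can jump toward the cluster. This is precisely the invasion effect treated in Case~2 of Propositions~\ref{prop1} and~\ref{prop3}, and I would control it in the same way: the set of type-1 sites lying outside the cluster is dominated by a product measure of density $u^* = 2\lambda_{10}/(1 + 2\lambda_{10}) < 1$, so the intervening type-1 strings have geometrically distributed lengths with a bounded parameter, each jump of the nearest type-2 carries the exponential tail of~\eqref{eq10}, and the gap estimates above survive. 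I would also note that, because the cluster is grown from a single site, it may die out; but extinction only makes $\{\tau = \infty\}$ more likely and moves both edges away from the type-2's, so it causes no difficulty for either estimate.
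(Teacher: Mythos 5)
There is a genuine gap at the central step of your plan: you control the type-2s of the surrounding process $\zeta^2$ by ``the reflected form of the Griffeath estimate \eqref{eq3}--\eqref{eq5}, using $\lambda_{20} < \lambda_c$.'' That estimate is about a subcritical \emph{contact process}, and the 2s in $\zeta^2$ do not form one. In Lemma~\ref{lemma1} and in Proposition~\ref{prop5} the Griffeath bound is only ever applied to the copy that contains \emph{nothing but} 2s (the 2-side of a segregated configuration, or the descendants of a single site), which, since $\delta = 0$, genuinely evolves as a contact process with parameter $\lambda_{20}$. In Lemma~\ref{lemma8} the roles are reversed: the single-site copy $\zeta^1$ is the nice (supercritical) contact process, while $\zeta^2$ is a modified stacked contact process started from an \emph{arbitrary} configuration in which 2s are interleaved with 1s. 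Those 2s advance not only by births at rate $\lambda_{20}$ but by infecting -- for large or infinite $\lambda_{21}$, instantaneously invading -- entire strings of 1s \emph{inside} $\zeta^2$, at times that have nothing to do with collisions against your cluster. Your invasion patch (product-measure domination plus geometric jumps, as in Case~2 of Proposition~\ref{prop1}) is the segregated mechanism: it bounds the jump of one 2-region at the instant it meets one 1-region. It does not control a general configuration, in which ``the nearest 2 to the right of the cluster'' is not a tracked block: it changes identity when its descendants die out, and it jumps left whenever some farther 2 invades an intervening 1-string, an event internal to $\zeta^2$. Taming the 2s of a general configuration is precisely the content of Proposition~\ref{prop5}, Lemma~\ref{lemma7} and Proposition~\ref{prop7}; your sketch silently assumes a rebuilt version of these results inside $\zeta^2$, and with Griffeath alone that step fails. (Minor but symptomatic: the tail in \eqref{eq10} is $C/x$, polynomial rather than exponential, exactly because the geometric parameter is not bounded away from one; and your ``small-gap'' step again treats the 2s near the origin as a receding subcritical process, which is not a uniform-in-$\xi$ fact you can invoke.)

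The fix -- and the paper's actual route -- is to quote the general-case results rather than rerun the collision scheme. Your coupling (independent copies $\xi^1$ from the origin and $\xi^2$ from the rest, with $\tau$ expressed through $\ell_t^1, r_t^1, a_t^2, b_t^2$) is the paper's first step; after that, no recursive stopping times, no Griffeath, and no invasion analysis are needed. For the first bound: with probability at least $2p$ (depending only on a fixed $n$), during $[0,1]$ there are no birth events along edges touching $[-n, n]$, a death event occurs at every $x$ with $0 < |x| \leq n$, and no death occurs at $0$; this manufactures a 2-free buffer of radius $n$ around the origin. Given the buffer, Poisson large deviations keep the cluster edges below $2 \lambda_{10} t + n/2 - 1$ for all $t$ up to probability $C e^{-cn}$, while Lemma~\ref{lemma7} with $\mu = 2 \lambda_{10}$ keeps every 2 of $\xi^2$ out of the cone $\{|x| \leq n/2 + \mu t\}$ up to probability $C e^{-cn}$; for $n$ large these two events force $\tau = \infty$, and the Markov property gives the uniform lower bound $p$. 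For the second bound: $t < \tau < \infty$ forces $\max \,(|\ell_s^1|, |r_s^1|) \geq \min \,(|a_s^2|, |b_s^2|) - 1$ for some $s > t$, which is incompatible, once $t$ is large, with the two events that the cluster edges stay below $2 \lambda_{10} s$ for all $s \geq t$ (probability at least $1 - C e^{-ct}$, summing integer-time Poisson bounds) and that, by the first statement of Proposition~\ref{prop7}, every 2 lies at distance more than $e^{cs}$ from the origin for all $s > t$ (probability at least $1 - C e^{-ct}$). Increasing $C$ to cover small $t$ completes the proof; this is both shorter and avoids the unsupported step above.
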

\begin{proof}
 Define a pair of independent copies~$\xi_t^1$ and~$\xi_t^2$ of the generalized stacked contact process with initial configurations
 $$ \xi_0^1 (y) = \xi_0 (y) \,\ind \{y = 0 \} \quad \hbox{and} \quad \xi_0^2 (y) = \xi_0 (y) \,\ind \{y \neq 0 \}. $$
 Define also
 $$ \begin{array}{rcl}
    \ell_t^1 = \inf \,\{x : \xi_t^1 (x) = 1 \} & \hbox{and} & r_t^1 = \sup \,\{x : \xi_t^1 (x) = 1 \} \vspace*{4pt} \\
       a_t^2 = \sup \,\{x < \ell_t^1 : \xi_t^2 (x) = 2 \} & \hbox{and} & b_t^2 = \inf \,\{x > r_t^1 : \xi_t^2 (x) = 2 \} \end{array} $$
 so that~$\tau$ can also be expressed as
 $$ \tau = \inf \,\{t > 0 : \ell_t^1 - a_t^2 \leq 1 \ \hbox{or} \ b_t^2 - r_t^1 \leq 1 \}. $$
 To show that~$\tau = \infty$ with positive probability, first we fix~$n$ and consider the case~$\min (|a_0^2|, |b_0^2|) \geq n$.
 Using large deviations estimates for the Poisson distribution, we can show that
 $$ P (\max (|\ell_t^1|, |r_t^1|) \geq 2 \lambda_{10} \,t + n / 2 - 1 \ \hbox{for some} \ t > 0) \leq C \,e^{-cn}. $$
 To do so, it suffices to first make an estimate for~$t \leq m_0 := \lfloor n / 4 \lambda_{10} \rfloor$, then for~$t \in [m, m + 1]$ for each~$m \geq m_0$, then to take a union bound.
 Then, taking~$\mu = 2 \lambda_{10}$ and the same~$n$ as in the statement of Lemma~\ref{lemma7}, we find that
 $$ P (\min (|a_t^2|, |b_t^2|) \leq 2 \lambda_{10} \,t + n/2 \ \hbox{for some} \ t > 0) \leq C \,e^{-cn}. $$
 Since in addition
 $$ \min \,(|a_t^2|, |b_t^2|) > \max (|\ell_t^1|, |r_t^1|) \ \hbox{for all} \ t > 0 \quad \hbox{implies that} \quad \tau = \infty, $$
 taking~$n$ large enough, we find that if~$\xi (x) \neq 2$ for~$|x| \leq n$ then
 $$ P (\tau = \infty) \geq 1/2. $$
 For~$\xi$ such that~$\xi (0) = 1$, the probability
 $$ P (\xi_1 (0) = 1 \ \hbox{and} \ \xi_1 (x) = 0 \ \hbox{for all} \ 0 < |x| \leq n) $$
 is at least the probability that, on the time interval~$[0, 1]$, there are no birth marks along edges touching~$[-n, n]$, there is no death mark at~0, and there is a death mark at every~$x$ with~$0 < |x| \leq n$,
 and this probability is at least~$2p$ for some~$p > 0$.
 Using the Markov property and the estimate on~$\tau = \infty$ in the previous case then gives~$P (\tau = \infty) \geq p > 0$ as desired. \\
\indent To deduce the estimate on~$P (t < \tau < \infty)$, we note that
 $$ P (\max (|\ell_s^1|, |r_s^1|) \geq 2 \lambda_{10} \,s \ \hbox{for some} \ s \geq t) \leq C \,e^{-ct} $$
 which can be proved by applying an estimate at each integer time~$n > t$ and summing over~$n$.
 Then, combining with the first statement in Proposition~\ref{prop7} and noting that
 $$ t < \tau < \infty \quad \hbox{implies that} \quad \max (|\ell_s^1|, |r_s^1|) \geq \min (|a_s^2|, |b_s^2|) - 1 \ \hbox{for some} \ s > t, $$
 we deduce the estimate on~$P (t < \tau < \infty)$.
\end{proof} \\ \\
 We are now ready to establish~\eqref{eq:1D-part2} which states the existence of a linearly growing region starting from a random space-time point in which the process agrees with the contact process with parameter~$\lambda_{10}$.
 This will also complete the proof of Theorem~\ref{th:1D}. \\ \\
\begin{demo}{\eqref{eq:1D-part2}}
 Given~$(x, s)$, for~$t \geq s$ recall that~$B_t (x, s)$ denotes the cluster of~$(x, s)$ at time~$t$.
 Let~$\tau_0 = 0$ and~$x_0$ be any site with~$\xi (x_0) = 1$.
 Without loss of generality, suppose that the set~$\{x > 0 : \xi (x) = 1 \}$ is infinite, and define~$x_i$ and~$\tau_i$ recursively for~$\lambda_{21} < \infty$ by letting
 $$ \begin{array}{rcl}
        \ell_t^i & = & \inf \,B_t (x_i, \tau_i) \vspace*{3pt} \\
           r_t^i & = & \sup \,B_t (x_i, \tau_i) \vspace*{3pt} \\
    \tau_{i + 1} & = & \inf \,\{t > \tau_i : \xi_t (\ell_t^i - 1) = 2 \ \hbox{or} \ \xi_t (r_t^i + 1) = 2 \} \vspace*{3pt} \\
       x_{i + 1} & = & \inf \,\{x > x_i : \xi_{\tau_{i + 1}} (x) = 1 \} \end{array} $$
 with the value of~$x_i$ being unimportant if~$\tau_i = \infty$.
 Note that if time~$\tau_i < \infty$ then site~$x_{i + 1}$ is well-defined due to the fact that
 $$ \{\xi : \xi (x) = 1 \ \hbox{for infinitely many} \ x > 0 \} $$
 is an invariant set for the dynamics.
 Let~$N = \sup \,\{i : \tau_i < \infty \}$.
 Applying the strong Markov property and using Lemma~\ref{lemma8}, we obtain that~$N$ is at most geometric with parameter~$p$.
 In addition, by the second part of Lemma~\ref{lemma8}, for~$i = 0, 1, 2, \ldots$,
 $$ \tau_{i + 1} - \tau_i \preceq T_i \quad \hbox{where} \quad P (T_i > t) \leq \max \,(1, C e^{-ct}) $$
 and the random variables~$T_i$ are independent.
 In particular, $\tau_N$ is almost surely finite.
 Let~$T = \tau_N$ and let~$X = x_N$, and let~$\ell_t = \ell_t^N$ and~$r_t = r_t^N$. \\
\indent Recall that~$\zeta_t$ denotes the process with initial configuration~$\zeta_0 (x) = 1$ for all~$x$.
 Since~$\lambda_{10} > \lambda_{20}$ by assumption, a straightforward coupling argument shows that, for any configuration~$\xi_0$,
 $$ \{x \in \Z : \xi_t (x) \neq 0 \} \subseteq \{x \in \Z : \zeta (x) = 1 \}. $$
 Therefore, $\zeta_{\tau_i} (x_i) = 1$ whenever~$\tau_i < \infty$.
 By definition of time~$\tau_{i + 1}$, the set~$B_t(x_i,\tau_i)$ is the set of infected sites in a (single-type) contact process started from the single infected site~$x_i$ at time~$\tau_i$, so a coupling of~\cite{durrett_1980} shows that if~$\tau_i < \infty$ then
 $$ \xi_t (x) = \zeta_t (x) \ \hbox{for all} \ x \in [\ell_t^i, r_t^i] \ \hbox{and all} \ \tau_i < t < \tau_{i + 1}. $$
 In~\cite{durrett_1980}, it is shown that, for the contact process (which in this context means in the absence of any interaction with~2s), $-\ell_t^i/ t$ and~$r_t^i / t \to \alpha > 0$ so the same is true here provided~$\tau_{i + 1} = \infty$, which is
 the case for~$i = N$.
 The proof is now complete.
\end{demo}


\end{document}